\newtheorem{theorem}{Theorem}[section] 
\newtheorem{corollary}{Corollary}[section] 
\newtheorem{definition}{Definition}[section] 
\newtheorem{proposition}{Proposition}[section] 
\newtheorem{remark}{Remark}[section] 
\begin{document}


\title{Gabor analysis as contraction of wavelets analysis}

\author{Eyal M. Subag}
\email{eus25@psu.edu}
\affiliation{Department of Mathematics, Pennsylvania State University, State College, Pennsylvania 16802,  USA}
\author{Ehud Moshe Baruch}
\email{embaruch@math.technion.ac.il}
\affiliation{Department of Mathematics, Technion-Israel Institute of  Technology, Haifa 32000, Israel }
\author{Joseph L. Birman}
\thanks{Deceased.}
\affiliation{Department of Physics,
The City College of the City University of   New York, New York N.Y.10031, USA }
\author{Ady Mann}
\email{ady@physics.technion.ac.il}
\affiliation{Department of Physics, Technion-Israel Institute of  Technology, Haifa 32000, Israel}

\begin{abstract}
\noindent We use the method of group contractions to relate wavelets analysis and Gabor analysis. Wavelets analysis is associated with  unitary irreducible representations of the affine group  while Gabor analysis is associated with unitary irreducible representations of the   Heisenberg group.  We obtain unitary irreducible representations of the Heisenberg group as contractions of representations of the extended affine group. Furthermore, we use these contractions to relate the two analyses, namely we contract coherent states, resolutions of the identity, and tight frames. In order to obtain the standard Gabor frame we construct a family of time localized wavelets frames that contract to that Gabor frame. Starting from a standard wavelets frame we construct a family of frequency localized wavelets frames that contract to a nonstandard Gabor frame. In particular we deform Gabor frames to wavelets frames.
\end{abstract}

\pacs{}

\maketitle 

\section{Introduction and main results}
\noindent Wavelets and Gabor analyses  are fundamental time-frequency analyses with various applications in quantum physics and signal analysis (e.g. see \cite{ali2013coherent,Chris2016,Feichtinger2003,Feichtinger1998,MR1843717}).  Both are incidents of analysis induced by coherent states system   \cite{perelomov2012generalized,ali2013coherent} that is attached to a unitary irreducible representation of a Lie group. We shall deal with the simplest   case in which wavelet analysis  corresponds to the affine group (also known as the $"ax+b"$ group), and  Gabor analysis corresponds to the Weyl-Heisenberg group. The purpose of this paper is to relate various aspects of the two analyses via  contraction \cite{Inonu-Wigner53,gilmore2012lie,Subag12}  between these groups and their unitary irreducible representations.   

Some relations between Gabor analysis and wavelets analysis were established before in \cite{Tor1,Tor2,Dahlke2008,MR1325539} by exhibiting both the Weyl-Heisenberg group
and the affine group as subgroups of the four dimensional affine Weyl-Heisenberg group. In \cite{Willard},
an approximation of the narrowband  cross-ambiguity function by wideband cross-ambiguity functions is interpreted in terms of
contraction of an extension of the affine group to the Heisenberg group.

Before  stating our results we shall recall the general set up for Perelomov coherent states \cite{perelomov2012generalized,ali2013coherent}.  Let $\pi: G\longrightarrow \mathcal{U}(\mathcal{H})$ be a unitary irreducible
representation  of a Lie group $G$ on a complex Hilbert space $\mathcal{H}$. For any nonzero $\psi\in \mathcal{H}$ let
 $G_{\psi,\pi}$ denote the isotropy subgroup of $\psi$, i.e., the subgroup that stabilizes the state that is determined by $\psi$.
  Let $X_{\psi,\pi}=G/G_{\psi,\pi}$ and assume that $dX_{\psi,\pi}$
  is  a positive invariant Borel  measure on $X_{\psi,\pi}$. If $\sigma:X_{\psi,\pi}\longrightarrow G$
  is a global Borel section then  $X_{\psi,\pi}$
  parameterizes a family of vectors in $\mathcal{H}$ in the following way
\begin{equation}
X_{\psi,\pi} \ni x \longmapsto \pi(\sigma(x))\psi
\label{e1}
\end{equation}
The vectors that constitute such a family are called generalized coherent states or Perelomov coherent states. For simplicity
 we will refer to these vectors as coherent states (CS). We will adopt the bra-ket notation and denote any vector $v\in \mathcal{H}$ by $|v\rangle$. 
By abuse of notation we will denote the coherent state that corresponds to $x \in X_{\psi,\rho}$ by the following symbols
\begin{equation}
|x\rangle=|\pi(\sigma(x))\psi\rangle=|\pi(x)\psi\rangle
\label{e2}
\end{equation}
 $\psi$ is called admissible if \begin{equation}
C_{\psi,\pi}=\int_{X_{\psi,\pi}}|\langle \psi|x\rangle |^2  dX_{\psi,\pi}(x)<\infty
\label{e3}
\end{equation}
Assuming admissibility, the coherent states family $\left\{|x\rangle \right\}_{x \in X_{\psi,\pi}}$ induces a resolution of the identity
\begin{equation}
\mathbb{I}=\frac{\| \psi \|^2}{C_{\psi,\pi}}\int_{X_{\psi,\pi}}|x\rangle \langle x|  dX_{\psi,\pi}(x)
\label{e4}
\end{equation}
i.e., for any $u,v\in \mathcal{H}$
\begin{equation}
\langle u|v\rangle=\frac{\| \psi \|^2}{C_{\psi,\pi}}\int_{X_{\psi,\pi}}\langle u|x\rangle \langle x|v\rangle  dX_{\psi,\pi}(x)
\label{e5}
\end{equation}
and
\begin{equation}
|v\rangle=\frac{\| \psi \|^2}{C_{\psi,\pi}}\int_{X_{\psi,\pi}} \langle x|v\rangle |x\rangle dX_{\psi,\pi}(x)
\label{e6}
\end{equation}
We shall now describe how one can vary the group and the representation in such a  way that leads to a continuous change of the coherent states. For that we shall use group contraction\cite{Inonu-Wigner53,Segal,Saletan61,Mic,Dooley85,Ricci}. 
Group contraction enables one to obtain a Lie group as a certain limit of another group.  Let $EA$ be the direct product group of the group of real numbers, $\mathbb{R}$,  and the affine group, A; we shall call this group the extended affine. Coherent states for the extended affine group coincide with coherent states for the affine group. Let $H$ be the Heisenberg group. We will show how to  contract the extended affine  group and its unitary irreducible representations to the Heisenberg group and its  unitary irreducible representations. This means that we shall have a continuous family of unitary irreducible representations  $\pi_{\epsilon}: G_{\epsilon}\longrightarrow \mathcal{U}(\mathcal{H})$ with $$G_{\epsilon}\simeq \begin{cases}
EA& \epsilon \neq 0\\
H& \epsilon =0
\end{cases}$$ 
Moreover we shall contract the wavelet analysis to Gabor anlysis in the following sense.
\begin{theorem}
For any infinite dimensional unitary irreducible representation $\pi_0$ of $G_0=H$ on $L^2(\mathbb{R})$ there is a continuous family of unitary irreducible representations 
$\pi_{\epsilon}: G_{\epsilon}\longrightarrow \mathcal{U}(L^2(\mathbb{R}))$ with $\epsilon \in [0,1]$, that realizes a contraction to $\pi_0$ and such that the corresponding formulas (\ref{e1}-\ref{e6}) vary continuously. In addition $\epsilon\neq 0$ leads to a  wavelets analysis on $L^2(\mathbb{R})$ and  $\epsilon=0$ to  Gabor analysis on $L^2(\mathbb{R})$.\label{The1}
\end{theorem}
\noindent For more  precise statements see Proposition \ref{Prop2}, Theorem \ref{Proposition5}, and Proposition \ref{Prop4}.
In many applications one prefers to work in a discretized setup. 
The discrete analogue for the resolution of the identity is given by the notion of tight frame, see, e.g., \cite{ali2013coherent,Dau}.
Let $\mathcal{H}$ be a Hilbert space. A sequence of vectors in $\mathcal{H}$, $\left\{\psi_n \right\}_{n=0}^{\infty}$, is
called a tight frame if there exists a positive number $A$ such that 
\begin{equation}
\frac{1}{A}\sum_{n=0}^{\infty}|\psi_n \rangle \langle \psi_n|=  \mathbb{I}
\label{e7}
\end{equation}
In that case, for any $f\in \mathcal{H}$ we  have the decomposition
\begin{equation}
|f\rangle=\frac{1}{A}\sum_{n=0}^{\infty}(\langle \psi_n|f\rangle)|\psi_n \rangle.
\label{e8}
\end{equation}
In \cite{Dau} tight frames for unitary irreducible representations of the affine group and the  Weyl-Heisenberg group were constructed by choosing a  lattice of coherent states. That is, for  
  $\pi: G\longrightarrow \mathcal{U}(L^2(\mathbb{R}))$  a unitary irreducible representation  of one of these groups, for certain  $\psi$ in   $L^2(\mathbb{R})$, and by picking  certain ``lattice points" $\{x_{n,m}\in X_{\psi,\pi}|(n,m)\in \mathbb{Z}^2\}$, they  constructed a tight frame in $L^2(\mathbb{R})$ by   
  $$\left| \psi^{\pi} _{(n,m)}\right\rangle=\pi(\sigma(x_{n,m}))\psi, \hspace{5mm} (n,m)\in \mathbb{Z}^2$$
For the  Weyl-Heisenberg group they showed how one can find an  admissible $\psi\in L^2(\mathbb{R})$ that is compactly supported. Such admissible functions are better suited for analysis of signals that are spread over a finite time interval. For the
two-dimensional affine group they showed how one can find an  admissible $\psi\in L^2(\mathbb{R})$   such that its Fourier transform is compactly supported. Such admissible functions are better suited for analysis of signals that are band limited, i.e., signals
 with bounded frequencies. In some sense these two analyses are complementary to each other but it is not clear how are they related.

Here we build continuous families of tight frames that interpolate Gabor frames and wavelets frames.
Keeping the same notations as in Theorem \ref{The1} we show that:

\begin{theorem}
For any infinite dimensional unitary irreducible representation $\pi_0$ of $G_0=H$ on $L^2(\mathbb{R})$ there is a continuous family of unitary irreducible representations  
$\pi_{\epsilon}: G_{\epsilon}\longrightarrow \mathcal{U}(L^2(\mathbb{R}))$, as in Theorem \ref{The1},  a family of $\psi_{\epsilon}\in L^2(\mathbb{R})$, and sections  $\sigma_{\epsilon}:X_{\psi_{\epsilon},\pi_{\epsilon}}\longrightarrow G_{\epsilon}$   such that:\\
1. $\pi_{\epsilon}$ realizes a contraction to $\pi_0$.\\
2. $\left\{\left| (\psi_{\epsilon})^{\pi_{\epsilon}} _{(n,m)}\right\rangle|(n,m)\in \mathbb{Z}^2\right\}$ constitute a tight frame in $L^2(\mathbb{R})$. \\
3. $\lim_{\epsilon \longrightarrow 0^+}\left| (\psi_{\epsilon})^{\pi_{\epsilon}} _{(n,m)}\right\rangle=\left| (\psi_{0})^{\pi_{0}} _{(n,m)}\right\rangle$.\\
4. The corresponding formulas (\ref{e7}-\ref{e8}) vary continuously.\label{the2}
\end{theorem}
\noindent Notice that we were able to contract both compactly supported  $\psi_{\epsilon}$ and band limited $\psi_{\epsilon}$. 
For more precise statements see Theorem \ref{propo7} (and its corollaries), Proposition \ref{propIV5}, Proposition\ref{prop9}, Proposition\ref{prop10} (and their corollaries).

Contraction of coherent states and resolutions of the identity have appeared in several cases before (e.g. see \cite{ali2013coherent,Renaud}) but this seems to be the first instance of contraction of frames. Recently, a transform relating dual pairs of wavelets  frames  to dual pairs of Gabor frames and vice versa was considered at \cite{Ole}. Deformation and perturbation of Gabor frames is an active research area\cite{Feichtinger,deGosson2015196,Grochenig}; our method and specifically Theorem \ref{the2} suggests a new approach for deforming Gabor frames.

This paper is organized as follows: In sections \ref{sec3} and \ref{sec4}  we describe the unitary irreducible representations and coherent states
of the Heisenberg group and of the extended affine group. In section \ref{sec5} we show how to contract the extended affine group to the
Heisenberg group. We then show how to contract the unitary irreducible representations, coherent states families, resolutions of the identity and tight frames. In section \ref{sec6}
we show how one can take a family of tight frames that is based upon a compactly supported function and  transform it to a family of tight
frames that is based upon a function with compactly supported Fourier transform. Concluding remarks are presented in section \ref{sec7}.

\section{the heisenberg group: unitary irreducible representations, coherent states and frames}\label{sec3}
\noindent We recall  that the  Heisenberg group is the semidirect product $H\equiv \mathbb{R}^2\rtimes_{\varphi}\mathbb{R}$ where  $\varphi:\mathbb{R} \longrightarrow Aut(\mathbb{R}^2)$
 is defined by $\varphi_{\alpha}\left(\left(\begin{array}{c}
x_1\\
x_2\end{array}\right)\right)= \left(\begin{array}{c}
x_1+\alpha x_2\\
x_2 \end{array}\right)$ for every $\alpha \in \mathbb{R}$ and $\left(\begin{array}{c}
x_1\\
x_2\end{array}\right) \in \mathbb{R}^2$. And the product of $(\alpha,\vec{v}), (\beta,\vec{u}) \in H$ is
given by:  $(\alpha,\vec{v})(\beta,\vec{u})=(\alpha + \beta, \varphi_{\alpha}(\vec{u})+\vec{v})$.
Using "the Mackey machine"\cite{Mac}, one can show that,
for every $A \in \mathbb{R}^{*},B\in \mathbb{R}$, we have a  unitary irreducible representation
\begin{eqnarray}\nonumber
&&\eta^{A,B}:H \longrightarrow \mathcal{U}(L^2(\mathbb{R},dx))\\ \nonumber
&& (\eta^{A,B}((c, \left(\begin{array}{c}
v_1\\
v_2\end{array}\right)))f)(x)=e^{i\left[A(v_1+xv_2)+Bv_2  \right]}f(c+x)
\end{eqnarray}
and up to equivalence of representations, any  infinite dimensional unitary irreducible representation of $H$ is of that form. Moreover $\eta^{A_1,B_1}\cong \eta^{A_2,B_2}$ if and only if $A_1=A_2$.

\subsection{Coherent states of the Heisenberg group}
\noindent We recall briefly  the construction of Perelomov coherent states for the unitary irreducible representations $\eta^{A,B}$. For further details see\cite{perelomov2012generalized}. Any nonzero $\psi\in L^{2}(\mathbb{R},dx)$ is admissible and for a generic  $\psi\in L^{2}(\mathbb{R},dx)$  the isotropy subgroup relative to the representation $\eta^{A,B}$ is given by $Z(H)\equiv\left\{(0, \left(\begin{array}{c}
v_1\\
0\end{array}\right))|v_1 \in \mathbb{R} \right\}$, the center of $H$.
Let $X_{H}=X_{\psi,\eta^{A,B}}$ be the  homogeneous space $ H/Z(H)$ and let $\sigma_H :X_H\longrightarrow H$ denote the natural Borel section
$\sigma_H((q, \left(\begin{array}{c}
z\\
p\end{array}\right))Z(H)) =(q, \left(\begin{array}{c}
\frac{qp}{2}\\
p\end{array}\right))$.
The plane $\mathbb{R}^2$ is naturally  identified with   $X_H$ by the following rule 
\begin{eqnarray}\nonumber
(q,p)\longmapsto \sigma_H((q, \left(\begin{array}{c}
z\\
p\end{array}\right))Z(H)) =(q, \left(\begin{array}{c}
\frac{qp}{2}\\
p\end{array}\right))
\end{eqnarray}
and parameterizes  the family of coherent states by 
\begin{eqnarray}\nonumber
&&(q,p)\longmapsto |\psi_{\sigma_H(q,p)}\rangle =\eta^{A,B}\left(q, \left(\begin{array}{c}
\frac{qp}{2}\\
p\end{array}\right)\right)\psi
\end{eqnarray}
The Lebesgue measure on $\mathbb{R}^2$ is an invariant measure for the action of $H$.
In cartesian coordinates of $\mathbb{R}^2$ equation (\ref{e4}) becomes
\begin{eqnarray}\label{2.9}
&&\mathbb{I}=\frac{\|\psi \|_{L^2(\mathbb{R},dx)}^2}{C_{\psi,\eta^{A,B}}} \int_{\mathbb{R}^2} |\psi_{\sigma_H(q,p)}\rangle \langle \psi_{\sigma_H(q,p)} |dqdp
\end{eqnarray}
A straightforward calculation shows that
\begin{eqnarray}\label{2.9}
C_{\psi,\eta^{A,B}}=\frac{2\pi\|\psi \|_{L^2(\mathbb{R},dx)}^4}{|A|}
\end{eqnarray}
For the canonical choice of the fiducial state, i.e., $\psi(x)=\pi^{-\frac{1}{4}}e^{\frac{-x^2}{2}}$,
and the natural representation $\eta^{A=1,B=0}$
we have $|\psi_{\sigma_H(-q,p)}\rangle(x)=\pi^{-\frac{1}{4}}e^{-ip(\frac{q}{2}-x)}e^{-\frac{(x-q)^2}{2}}$.
\subsection{Frames for the Heisenberg group. }
\noindent Let $q_0$ and  $p_0$ be real numbers such that $|q_0p_0|<2\pi$. Consider the discrete subset of $H$
which is given by $H_{q_0,p_0}=\left\{\left( nq_0, \left(\begin{array}{c}
\frac{mnq_0p_0}{2}\\
mp_0\end{array}\right)\right)|n,m \in \mathbb{Z} \right\}$. It is well known (e.g., \cite{perelomov2012generalized})
that for any $ 0\neq\psi \in L^2(\mathbb{R},dx)$ the sequence
$$\left| \psi^{A,B} _{(n,m)}\right\rangle=\left| \psi^{\eta^{A,B}} _{\sigma_H(nq_0,mp_0)}\right\rangle=\eta^{A,B}\left(nq_0, \left(\begin{array}{c}
\frac{nq_0mp_0}{2}\\
mp_0\end{array}\right)\right)\psi$$
where $n,m \in \mathbb{Z}$ is dense in $L^2(\mathbb{R},dx)$. Note that for $q_0p_0=2\pi$  the sequence
constitutes a Von Neumann lattice which was considerably studied and used in quantum mechanics, see e.g., \cite{Von,Barg,Per2,Bac,Bal}.
The following theorem follows from \cite{Dau}; for self-containment we include a variant of the proof given there.  
\begin{theorem}\label{pr1}
Let $0\neq\psi \in L^2(\mathbb{R},dx)$ be such that its support is contained in the interval $[-L,L]$. Let
$p_0=\frac{\pi}{AL}$ and fix $q_0$ such that $|q_0p_0|<2\pi$, i.e., such that $|q_0|<2|A|L$.
Suppose that there exists a positive constant $\chi$ such that for all $x\in \mathbb{R}$
\begin{equation}
\sum_{n\in \mathbb{Z}}\left|\psi(x+nq_0) \right|^2=\chi .
\end{equation}
Then the sequence $\left\{\left| \psi^{A,B} _{(n,m)}\right\rangle\right\}_{n,m\in \mathbb{Z}}$ constitutes
a tight frame with frame constant that is equal to $2L\chi $ i.e.,
\begin{eqnarray}
&&\sum_{n,m\in \mathbb{Z}}^{\infty}\left| \psi^{A,B} _{(n,m)}\right\rangle \left\langle \psi^{A,B} _{(n,m)}\right|=  2L\chi \mathbb{I}
\end{eqnarray}
$($For construction of such $\psi$ see \cite{Dau}$)$.
\end{theorem}

\begin{proof}
Let $f \in L^2(\mathbb{R},dx)$. Recall that  Parseval's theorem implies that for $f \in L^2([-L,L],dx)\subset L^2(\mathbb{R},dx)$
we have $2L\|f\|_{L^2(\mathbb{R},dx)}^2=\sum_{n\in \mathbb{Z}}|C_n|^2$ where $C_n=\int_{\mathbb{R}}f(x)e^{-i\frac{n}{L}\pi x}dx$.
Using this we observe that
\begin{eqnarray}\nonumber
&&\sum_{n,m\in \mathbb{Z}}\left|\langle f| \psi^{A,B} _{(n,m)}\rangle\right|^2=\sum_{n,m\in \mathbb{Z}}\left|\int_{\mathbb{R}} \overline{f(x)}e^{iAx\frac{\pi}{AL}m}\psi(x+nq_0)dx \right|^2=\\ \nonumber
&&\sum_{n,m\in \mathbb{Z}}\left|\int_{\mathbb{R}} \overline{f(y-nq_0)}e^{i(y-nq_0)\frac{\pi}{L}m}\psi(y)dy \right|^2=\sum_{n,m\in \mathbb{Z}}\left|\int_{\mathbb{R}} \overline{f(y-nq_0)}e^{iy\frac{\pi}{L}m}\psi(y)dy \right|^2=\\ \nonumber
&&{2L}\sum_{n\in \mathbb{Z}}\int_{\mathbb{R}} \left|f(y-nq_0)\psi(y)\right|^2dy =  {2L}\sum_{n\in \mathbb{Z}}\int_{\mathbb{R}} \left|f(x)\right|^2\left|\psi(x+nq_0)\right|^2dx =\\ \nonumber
&&  {2L}\int_{\mathbb{R}} \left|f(x)\right|^2\sum_{n\in \mathbb{Z}}\left|\psi(x+nq_0)\right|^2dx =  {2L}\chi \int_{\mathbb{R}} \left|f(x)\right|^2dx = {2L}\chi  \|f\|^2_{L^2(\mathbb{R},dx)}
\end{eqnarray}
(we  used the change of variables $y=x+nq_0$ and we interchanged the order of the  sum and the integral which is justified by the compactness of the support of $\psi$.)
\end{proof}

\section{the extended affine group: unitary irreducible representations and coherent states}\label{sec4}
\subsection{The affine group}
\noindent Let $A$ be the affine group realized  as the matrix group $$\left\{\left(\begin{array}{cc}
\alpha & \beta  \\
0 & 1 \end{array}\right)|\alpha \in \mathbb{R}^+,\beta\in\mathbb{R}\right\}  $$ Using "the Mackey machine" \cite{Mac},
one can show that for any $b\in\mathbb{R}^*$ the formula
\begin{eqnarray}\nonumber
\left(U^b\left(\begin{array}{cc}
\alpha & \beta  \\
0 & 1 \end{array}\right)f\right)(x)=\frac{1}{\sqrt{\alpha}}f(\frac{x+b\beta}{\alpha})
\end{eqnarray}
defines a unitary irreducible representation of the affine group on the subspace of  $L^2(\mathbb{R},dx)$ which consists of functions
that their Fourier transforms are supported in the positive half of the real line.
It is well known (e.g.,  \cite{ali2013coherent}) that $\psi\in L^2(\mathbb{R},dx)$ is admissible if and only if
\begin{eqnarray}\nonumber
\int_{0}^{\infty}|\mathcal{F}(\psi)(x)|^2\frac{dx}{x}< \infty
\end{eqnarray}
where $\mathcal{F}(\psi)$ is the Fourier transform of $\psi$.
Let $\psi$ be  admissible, then
\begin{eqnarray}\nonumber
&& C_{\psi,U^b}= \frac{2\pi}{|b|}\|\psi\|_{L^2(\mathbb{R},dx)}^2\int_{0}^{\infty}|\mathcal{F}(\psi)(x)|^2\frac{dx}{x}< \infty
\end{eqnarray}
and $G_{\psi,U^b}$ is the trivial group. The invariant measure on $X_{\psi,U^b}=A$ in terms of the natural
coordinates is given by $\frac{d\alpha d\beta}{\alpha^2}$.  The coherent states family is parameterized by the affine group, $A$, according to:
\begin{eqnarray}\nonumber
&&|\psi_{\alpha,\beta}^b\rangle(x) =\left(U^b\left(\begin{array}{cc}
\alpha & \beta  \\
0 & 1 \end{array}\right)\psi \right)(x)=\frac{1}{\sqrt{\alpha}}\psi(\frac{x+b\beta}{\alpha})
\end{eqnarray}
and satisfies
\begin{eqnarray}\label{3.4}
&&\mathbb{I}=\frac{\|\psi \|_{L^2(\mathbb{R},dx)}^2}{C_{\psi,U^b}}\int_{\mathbb{R}^+\times \mathbb{R}}|\psi^b_{\alpha,\beta}\rangle \langle \psi^b_{\alpha,\beta} |\frac{d\alpha d\beta }{\alpha^2}
\end{eqnarray}
There are two inequivalent infinite dimensional unitary irreducible representations of the affine group, one for each sign of $b$. A frequent choice in the literature is $U^{b=-1}$. In this representation the CS are given by
\begin{eqnarray}\label{3.5}
|\psi_{\alpha,\beta}\rangle(x) =|\psi_{\alpha,\beta}^{b=-1}\rangle(x)=\frac{1}{\sqrt{\alpha}}\psi(\frac{x-\beta}{\alpha})
\end{eqnarray}
Any admissible function $\psi$ is called a wavelet and for any  $f\in L^2(\mathbb{R},dx)$ its wavelet transform with respect to $\psi$ is given by :
\begin{eqnarray}\nonumber
\langle \psi_{\alpha,\beta} |f\rangle=\frac{1}{\sqrt{\alpha}}\int_{-\infty}^{\infty}\overline{\psi(\frac{(x-\beta)}{\alpha})}f(x)dx
\end{eqnarray}

\subsection{Representation theory of the extended affine group }\label{secb}
\noindent We define the extended affine group to be the direct product of the  affine group,
$A$, with the additive group of the real numbers, $\mathbb{R}$. We denote this group
by $EA\equiv A \oplus \mathbb{R}$. The product in $EA$ is given by $(\alpha, \beta, \gamma)(x,y,z)=(\alpha x, \alpha y+\beta, \gamma+z)$.
  For $a\in \mathbb{R}$ let $\chi_{a}(x)=e^{iax}$ be a unitary character of $ \mathbb{R}$.
  Let $\mathcal{H}$ denote the subspace of $L^2(\mathbb{R},dx)$ which consists of all functions
  whose Fourier transforms vanish on the negative half of $\mathbb{R}$.   For every
  $b\in \mathbb{R}^{*}$, $a\in \mathbb{R}$ we have the   unitary irreducible representation $U^b\otimes \chi_{a}:EA \longrightarrow \mathcal{U}(\mathcal{H})$ which is given by:
      \begin{eqnarray}\nonumber
      (U^b\otimes \chi_{a}((\alpha, \beta, \gamma))f)(x)=e^{ia \gamma}\frac{1}{\sqrt{\alpha}}f(\frac{x+b\beta}{\alpha})
      \end{eqnarray}
  Up to equivalence, any  infinite dimensional unitary irreducible representation of $EA$ is of that form. In addition  $\rho^{a,b}\simeq \rho^{a',b'}$ if and only if $a=a'$. 
For contraction of the unitary irreducible representations of $EA$ we need a different  realization which is better suited for the contraction procedure.
We obtain  this realization as follows.
We define the linear invertible maps $S_1,S_2,S_3(\epsilon)$ by:\\
1.  $S_1:\mathcal{H} \xrightarrow{\sim} L^2(\mathbb{R}^+,dx)$, $S_1(f)=\mathcal{F}(f)|_{\mathbb{R^+}}$  where
  $\mathcal{F}(f)(w)=\frac{1}{\sqrt{2\pi}}\int_{\mathbb{R}}f(x)e^{-iwx}dx$.\\ 
2.  $S_2:L^2(\mathbb{R}^+,dx) \xrightarrow{\sim}L^2(\mathbb{R}^+,\frac{1}{x}dx)$ where $S_2(f)(x)=\sqrt{x}f(x)$.\\
3.   For $\epsilon >0$,  $S_3(\epsilon):L^2(\mathbb{R}^+,\frac{1}{x}dx) \xrightarrow{\sim}L^2(\mathbb{R},dx)$, $S_3(\epsilon)(f)=f\circ \psi_{\epsilon}$, where $\psi_{\epsilon}(x)=e^{-\epsilon x}$.\\
We also define $T_{\epsilon}=S_3(\epsilon)\circ S_2 \circ S_1$ 
 and use it as intertwining operator  in the following way.
For each   $\epsilon \in (0,1]$, $b\in \mathbb{R}^{*}$ and $a\in \mathbb{R}$, we intertwine  $U^b\otimes \chi_{a}$ with $T_{\epsilon}$ to get the equivalent representation
\begin{eqnarray}\nonumber
&& \rho_{\epsilon}^{a,b}:EA \longrightarrow \mathcal{U}(V_{\epsilon})\\ \nonumber
&& (\alpha, \beta, \gamma) \longmapsto T_{\epsilon} \circ (U^b\otimes \chi_{a})(\alpha, \beta, \gamma) \circ T^{-1}_{\epsilon}
\end{eqnarray}
Explicitly  for every $f \in V_{\epsilon}$,
\begin{equation}\nonumber
\rho_{\epsilon}^{a,b}(f)|_x=e^{ia\gamma}e^{ib\beta e^{-\epsilon x}}f(-\frac{\log{\alpha}}{\epsilon}+x)
\end{equation}

\subsection{Coherent states of the extended affine group}
\noindent In this section we describe  Perelomov coherent states for the unitary irreducible representations of EA.
One can show that for a generic normalized function $\psi\in L^{2}(\mathbb{R}^+,\frac{dx}{x})$ the isotropy subgroup relative to
the representation $\rho^{a,b}$ is given by $Z(EA)=\left\{\left(1,0,\gamma\right)|\gamma \in \mathbb{R} \right\}$, the center
of $EA$. Let $\psi\in L^{2}(\mathbb{R}^+,\frac{dx}{x})$  be a nonzero  admissible fiducial state. Let $X_{EA}$ denote the homogeneous
space $EA/Z(EA)$ and let $d_{X_{EA}}$ be a positive invariant Borel measure on $X_{EA}$.
Let $\sigma_{EA} :X_{EA}\longrightarrow EA$ be a global Borel section.
The set $\mathbb{R}^+\times\mathbb{R}$ naturally  parameterizes $X_{EA}$ by the following rule
\begin{eqnarray}\nonumber
(\alpha, \beta)\longmapsto \sigma_{EA}(\alpha, \beta,0)Z(EA).
\end{eqnarray}
In these coordinates (\ref{e4}) becomes
\begin{eqnarray}\label{3.12}\nonumber
&&\mathbb{I}= \frac{\|\psi\|^2}{C_{\psi,\rho^{a,b}}}\int_{\mathbb{R}^+\times \mathbb{R}} |\psi^{a,b}_{\sigma_{EA} \left(
\alpha, \beta, 0 \right)}\rangle \langle\psi^{a,b}_{\sigma_{EA} \left(
\alpha, \beta, 0 \right)}|\frac{d\alpha d\beta}{\alpha^2}
\end{eqnarray}
where
\begin{eqnarray}\nonumber
|\psi^{a,b}_{\sigma_{EA} \left(
\alpha, \beta, \gamma \right)}\rangle\equiv\rho^{a,b}(\sigma_{EA} \left((
\alpha, \beta, \gamma \right)Z(EA)) )\psi
\end{eqnarray}
For admissible  $\psi\in L^{2}(\mathbb{R}^+,\frac{dx}{x})$, a straightforward calculation shows that
\begin{eqnarray}\nonumber
&&C_{\psi,\rho^{a,b}}= \frac{2\pi}{|b|}\|\psi\|_{L^{2}(\mathbb{R}^+,\frac{dx}{x})}^2 \|\psi\|_{L^{2}(\mathbb{R}^+,\frac{dx}{x^2})}^2
\end{eqnarray}
We construct frames for the unitary irreducible representations of $EA$ in section \ref{sec5}.

\section{Contracting the extended affine group and its coherent states}\label{sec5}
\noindent In this section we define contraction of Lie groups and
their representations and show how the extended affine group along with its unitary irreducible representations contract to the Heisenberg
group and its unitary irreducible representations.
Moreover we contract the corresponding coherent state families,  resolutions of the identity, and tight frames.

\subsection{The contraction of $EA$ to $H$}
\noindent Contraction of Lie algebras was first introduced by  Segal \cite{Segal} and  {I}n\"{o}n\"{u} and Wigner \cite{Inonu-Wigner53}, as a natural notion for limit of Lie algebras.
If one wants to define contraction for Lie groups, there are several approaches, see for example \cite{Mic,Dooley85,Ricci}.
In our case (the contraction of $EA$ to $H$), since both groups are exponential, lifting the Lie algebras contraction using the exponential map is
 straightforward and we obtain the following definition:
\begin{definition}\label{deIV1}
Let $G_1,G_0$ be two Lie groups. Suppose that for every
$\epsilon \in (0,1]$ we have a diffeomorphism $P_{\epsilon} : G_0 \longrightarrow G_1$  such that the following
conditions hold:
\begin{enumerate}
	\item $P_{\epsilon}(e_0) = e_1$ where $e_0, e_1$ are the identity elements of $G_0, G_1$, respectively,
		\item For every $x, y \in G_0$
\begin{equation}\nonumber
x \cdot y = \lim_{\epsilon \longrightarrow 0^+} P_{\epsilon}^{-1}(P_{\epsilon}(x)\cdot P_{\epsilon}(y)),
\end{equation}
\end{enumerate}
then we say that $G_0$ is the contraction of $G$ by $P_{\epsilon} $ and we denote it by $G_1\stackrel{P(\epsilon)}{\longrightarrow} G_{0}$.
\end{definition}

\begin{proposition}
For $\epsilon \in (0,1]$ let $P_{\epsilon}:H\longrightarrow EA$ be defined by
\begin{eqnarray}\nonumber
&& P_{\epsilon}\left(x, \left(\begin{array}{c}
y\\
z\end{array}\right)\right)=\left(\begin{array}{ccc}
e^{-\epsilon x} &, z(\frac{e^{-\epsilon x}-1}{ -\epsilon x}) &, z+\epsilon (y-\frac{ xz}{2})\end{array}\right)
\end{eqnarray}
for $x\neq 0$ and
\begin{eqnarray}\nonumber
& P_{\epsilon}\left(0, \left(\begin{array}{c}
y\\
z\end{array}\right)\right)&=\lim_{x \longrightarrow 0}\left(\begin{array}{ccc}
e^{-\epsilon x} &, z(\frac{e^{-\epsilon x}-1}{ -\epsilon x}) &, z+\epsilon (y-\frac{ xz}{2})\end{array}\right)=\left(\begin{array}{ccc}
1 &, z &, z+\epsilon y\end{array}\right)
\end{eqnarray}

Then $EA\stackrel{P(\epsilon)}{\longrightarrow} H$  i.e., $P_{\epsilon}$ realizes the contraction of $EA$ to $H$.
\end{proposition}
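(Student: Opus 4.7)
The plan is to verify the two conditions of the contraction definition directly, reducing everything to a careful Taylor expansion in $\epsilon$. First, I would check that $P_\epsilon$ is a diffeomorphism by writing its inverse explicitly: given $(\alpha,\beta,\gamma)\in EA$, set $x=-\log\alpha/\epsilon$, $z=\beta\,\log\alpha/(\alpha-1)$ (extended by continuity to $z=\beta$ at $\alpha=1$ via the removable singularity), and $y=(\gamma-z)/\epsilon+xz/2$. Smoothness in all variables is then clear. The normalization $P_\epsilon(0,(0,0))=(1,0,0)$ is the $x=0$ case of the defining formula and matches $e_{EA}$.

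For condition 2, take $g_i=(x_i,(y_i,z_i))\in H$ with images $P_\epsilon(g_i)=(\alpha_i,\beta_i,\gamma_i)$ and compute the product in $EA$, which by the semidirect structure is $(\alpha_1\alpha_2,\ \alpha_1\beta_2+\beta_1,\ \gamma_1+\gamma_2)$. The first coordinate gives $\alpha_1\alpha_2=e^{-\epsilon(x_1+x_2)}$, so the first component of $P_\epsilon^{-1}$ applied to the product is exactly $x_1+x_2$ for every $\epsilon$, matching the Heisenberg first coordinate on the nose.

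The real work is in the remaining two coordinates. Using $(e^{-\epsilon x}-1)/(-\epsilon x)=1-\epsilon x/2+O(\epsilon^2)$, I expand $\beta_1+\alpha_1\beta_2$ to order $\epsilon$, then multiply by $\log(\alpha_1\alpha_2)/(\alpha_1\alpha_2-1)=1+\epsilon(x_1+x_2)/2+O(\epsilon^2)$. This yields $z=(z_1+z_2)+\tfrac{\epsilon}{2}(x_2z_1-x_1z_2)+O(\epsilon^2)$, so in particular $z\to z_1+z_2$. For the $y$-coordinate, I need $\gamma-z$ to order $\epsilon$, since we subsequently divide by $\epsilon$; combining $\gamma-(z_1+z_2)=\epsilon\bigl(y_1+y_2-(x_1z_1+x_2z_2)/2\bigr)$ with the order-$\epsilon$ correction to $z$ just computed, and adding the limit $xz/2\to(x_1+x_2)(z_1+z_2)/2$, the six cross terms collapse via the elementary identity $-(x_1z_1+x_2z_2)/2-(x_2z_1-x_1z_2)/2+(x_1+x_2)(z_1+z_2)/2=x_1z_2$, giving $y\to y_1+y_2+x_1z_2$. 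This is precisely the Heisenberg product $(x_1+x_2,(y_1+y_2+x_1z_2,z_1+z_2))$.

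The main obstacle is bookkeeping: the $y$-coordinate is obtained by dividing a quantity of order $\epsilon$ by $\epsilon$, so each Taylor expansion must be carried to the correct order and the signs of the cross terms tracked carefully; a single sign or factor-of-two slip would break the match with the Heisenberg bracket. No deeper machinery (such as Baker--Campbell--Hausdorff at the Lie-algebra level) is needed, although that would provide a conceptual cross-check, since the linear map on Lie algebras underlying $P_\epsilon$ scales the generator dual to the affine parameter by $\epsilon$, which is exactly the classical \textsc{I}n\"on\"u--Wigner contraction of $\mathfrak{ea}$ onto $\mathfrak{h}$.
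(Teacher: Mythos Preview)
Your proposal is correct and follows exactly the paper's approach: the paper writes the same explicit inverse $P_\epsilon^{-1}$ (your formula $y=(\gamma-z)/\epsilon+xz/2$ is algebraically identical to the paper's $y$-coordinate), notes that $P_\epsilon$ is a diffeomorphism preserving identities, and then simply states that ``a straightforward calculation'' verifies the limit condition. Your Taylor-expansion bookkeeping is precisely that omitted calculation, and it checks out.
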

\begin{proof}
We first note that $P_{\epsilon}$ is smooth and note that
\begin{eqnarray}\nonumber
&& P^{-1}_{\epsilon}\left(\alpha, \beta, \gamma \right)= \left(-\frac{1}{\epsilon}\ln \alpha, \left(\begin{array}{c}
\frac{1}{\epsilon}\left(\gamma-\beta\frac{\ln \alpha}{\alpha-1}-\beta\frac{\ln^2 \alpha}{2(\alpha-1)} \right)\\
\beta\frac{\ln \alpha}{\alpha-1}\end{array}\right)\right)
\end{eqnarray}
for $x\neq 0$ and
\begin{eqnarray}\nonumber
&& P^{-1}_{\epsilon}\left(\alpha, \beta, \gamma \right)= \left(0, \left(\begin{array}{c}
\frac{1}{\epsilon}\left(\gamma-\beta \right)\\
\beta\end{array}\right)\right)
\end{eqnarray}
Obviously $P_{\epsilon}$ is a diffeomorphism that preserves the identity element.
A straightforward calculation shows that for any $x,y \in H$
\begin{equation}\nonumber
x \cdot y = \lim_{\epsilon \longrightarrow 0^+} P_{\epsilon}^{-1}(P_{\epsilon}(x)\cdot P_{\epsilon}(y))
\end{equation}
where the product on the left hand side is the one in $H$ and on the right hand side the one in $EA$. Hence the Heisenberg group, $H$, is the contraction of the extended affine, $EA$, by $P_{\epsilon}$ according to definition  \ref{deIV1}.
\end{proof}

\subsection{The contraction of $\rho^{a,b}$ to $\eta^{A,B}$}
\noindent In \cite{Subag12}, for any representation of the Lie algebra of the Heisenberg group that arises from a unitary irreducible
representation of the Heisenberg group, a family of representations of the Lie algebra of the extended affine that contracts to the given representation was constructed.
In the following we explain the analogous picture at the level of the representations of the groups themselves.

 From now on we  fix a representation $\eta^{A,B}$ of the Heisenberg group on $L^2(\mathbb{R})$ and let $a(\epsilon)=a_0+\frac{A}{\epsilon}, b(\epsilon)=b_0-\frac{A}{\epsilon}$ such that $a_0+b_0=B$.

\begin{proposition}\label{Prop2}
\textbf{$[$Contraction of representations$]$} 
 Consider the family of representations of $EA$ on $L^2(\mathbb{R})$
that is given by  $\left\{        \rho_{\epsilon}^{a(\epsilon),b(\epsilon)}         \right\}_{\epsilon \in (0,1]}$.
Then   $\eta^{A,B}$ is  the strong contraction of the family of representations
 $\left\{         \rho_{\epsilon}^{a(\epsilon),b(\epsilon)}        \right\}_{\epsilon \in (0,1]}$ in the following sense:
\begin{enumerate}
  \item \textbf{(Pointwise convergence)} For any $f\in L^2(\mathbb{R})$, any $x \in \mathbb{R}$, and any $g\in H$
  $$ \lim_{\epsilon \longrightarrow 0^+}\left(\rho_{\epsilon}^{a(\epsilon),b(\epsilon)}(P_{\epsilon}(g))f\right)(x)=\left(\eta^{A,B}(g)f\right)(x).$$
  \item \textbf{(Norm convergence)} For any $f\in L^2(\mathbb{R})$  and any $g\in H$
  $$ \lim_{\epsilon \longrightarrow 0^+}\left(\rho_{\epsilon}^{a(\epsilon),b(\epsilon)}(P_{\epsilon}(g))f\right)=\left(\eta^{A,B}(g)f\right).$$
\end{enumerate}
\end{proposition}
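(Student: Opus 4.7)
The plan is to reduce both statements to an explicit computation of the phase picked up by $\rho_\epsilon^{a(\epsilon),b(\epsilon)}(P_\epsilon(g))$ and then invoke dominated convergence. First I would fix $g = (x,(y,z)^T) \in H$ with $x \neq 0$ (the case $x=0$ is handled analogously by the second branch of $P_\epsilon$ and yields the same limit) and substitute $P_\epsilon(g) = (e^{-\epsilon x},\, z(e^{-\epsilon x}-1)/(-\epsilon x),\, z+\epsilon(y-xz/2))$ into the explicit formula
\[
(\rho_\epsilon^{a,b}(\alpha,\beta,\gamma) f)(w) = e^{ia\gamma}\, e^{ib\beta e^{-\epsilon w}}\, f\bigl(-\tfrac{\log\alpha}{\epsilon} + w\bigr).
\]
Because $-\log(e^{-\epsilon x})/\epsilon = x$ exactly, the translation part is already $f(x+w)$, which matches the translation in $(\eta^{A,B}(g)f)(w) = e^{i[A(y+wz)+Bz]} f(x+w)$. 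Everything therefore reduces to understanding the limit of the phase $\Phi_\epsilon(w) := a(\epsilon)\gamma_\epsilon + b(\epsilon)\beta_\epsilon e^{-\epsilon w}$.

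Next I would expand $\Phi_\epsilon(w)$ asymptotically as $\epsilon \to 0^+$. Substituting $a(\epsilon) = a_0 + A/\epsilon$ and $b(\epsilon) = b_0 - A/\epsilon$ produces divergent contributions $+Az/\epsilon$ from the first summand and $-Az/\epsilon \cdot (1-e^{-\epsilon x})/(\epsilon x) \cdot e^{-\epsilon w}$ from the second. Using $(1-e^{-\epsilon x})/(\epsilon x) = 1 - \epsilon x/2 + O(\epsilon^2)$ and $e^{-\epsilon w} = 1 - \epsilon w + O(\epsilon^2)$, a direct bookkeeping calculation shows that the $\pm Az/\epsilon$ contributions cancel and the $O(1)$ residue is
\[
\Phi_\epsilon(w) = A(y + zw) + (a_0 + b_0)z + O(\epsilon) = A(y + zw) + Bz + O(\epsilon),
\]
where the hypothesis $a_0 + b_0 = B$ is used exactly at the last step. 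This is precisely the Heisenberg phase.

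Part (1) of the proposition follows immediately: $\Phi_\epsilon(w) \to A(y+zw) + Bz$ for every fixed $w$, and exponentiation is continuous. For part (2), I would write
\[
\bigl\|\rho_\epsilon^{a(\epsilon),b(\epsilon)}(P_\epsilon(g))f - \eta^{A,B}(g)f\bigr\|_{L^2}^2 = \int_{\mathbb{R}} \bigl|e^{i\Phi_\epsilon(w)} - e^{i[A(y+zw)+Bz]}\bigr|^2 |f(x+w)|^2\, dw.
\]
The integrand tends to zero pointwise in $w$ by the phase calculation and is dominated by $4|f(x+w)|^2$, which is integrable with integral $4\|f\|^2$ by translation invariance. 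Lebesgue's dominated convergence theorem then gives the $L^2$ claim.

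The main obstacle I anticipate is precisely the cancellation step in the asymptotic expansion: the $1/\epsilon$ divergences come from two distinct places (the group factor $\beta_\epsilon$ itself must first be expanded before being multiplied by the $w$-dependent factor $e^{-\epsilon w}$), and the finite remainder must reassemble cleanly into the two structurally different pieces $A(y+zw)$ and $Bz$. The identity $a_0 + b_0 = B$ is exactly the accounting that matches the residual central contribution, which explains why the decomposition $a(\epsilon),b(\epsilon)$ prescribed in Theorem \ref{thm2} is essentially forced by the structure of the contraction rather than being one choice among many.
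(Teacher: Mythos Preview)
Your proof is correct and follows essentially the same route as the paper: both substitute $P_\epsilon(g)$ into the explicit formula for $\rho_\epsilon^{a(\epsilon),b(\epsilon)}$, observe that the translation part is exact, compute the limit of the phase, and then invoke dominated convergence with the bound $|e^{i\Phi_\epsilon}-e^{i\Phi_0}|^2|f(\cdot+x)|^2 \le 4|f(\cdot+x)|^2$ for the $L^2$ statement. Your version is slightly more explicit in displaying the cancellation of the $A z/\epsilon$ divergences and the emergence of $(a_0+b_0)z = Bz$, which the paper simply records as the final limit.
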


\begin{proof}
We observe that for  any $f\in L^2(\mathbb{R})$, any $x \in \mathbb{R}$, and any $\left( c, \left(\begin{array}{c}
v_1\\
v_2\end{array}\right)\right)\in H$
\begin{eqnarray}\label{4.199}\nonumber
&& \lim_{\epsilon \longrightarrow 0^+}\left(\rho_{\epsilon}^{a(\epsilon),b(\epsilon)}\left(P_{\epsilon}\left( c, \left(\begin{array}{c}
v_1\\
v_2\end{array}\right)\right) \right)f\right)(x)= \\ \nonumber
&& \lim_{\epsilon \longrightarrow 0^+} e^{ia(\epsilon)[ v_2+\epsilon(v_1-\frac{cv_2}{2}) ]+ib(\epsilon)v_2(\frac{e^{-\epsilon c}-1}{-\epsilon c}) e^{-\epsilon x}}f(-\frac{\log{e^{-\epsilon c}}}{\epsilon}+x) \\ \nonumber
&& = e^{i[Bv_2+A(v_1+v_2x)]}f(c+x)= \eta^{A,B}\left( c, \left(\begin{array}{c}
v_1\\
v_2\end{array}\right)\right)  (f)(x)
\end{eqnarray}
For condition (2) we observe that for any $x \in \mathbb{R}$
\begin{eqnarray}\nonumber
&&\left | \rho_{\epsilon}^{a(\epsilon),b(\epsilon)}\left(P_{\epsilon}\left( c, \left(\begin{array}{c}
v_1\\
v_2\end{array}\right)\right) \right)f(x)-\eta^{A,B}\left( c, \left(\begin{array}{c}
v_1\\
v_2\end{array}\right)\right) f(x)\right |^2 
 \leq2|f(x+c)| ^2
\end{eqnarray}
Since  $f\in L^2(\mathbb{R},dx)$  then $2|f(x+c)| ^2\in L^1(\mathbb{R},dx)$  and by  Lebesgue's dominated convergence theorem  
{\small\begin{eqnarray}\nonumber
&&\hspace{2mm}\lim_{\epsilon \longrightarrow 0^+}\left\| \rho_{\epsilon}^{a(\epsilon),b(\epsilon)}\left(P_{\epsilon}\left( c, \left(\begin{array}{c}
v_1\\
v_2\end{array}\right)\right) \right)f-\eta^{A,B}\left( c, \left(\begin{array}{c}
v_1\\
v_2\end{array}\right)\right) f\right\|^2 =0
\end{eqnarray}}
\end{proof}

\subsection{Contraction of coherent states}
\noindent We can reinterpret the contraction  $EA\stackrel{P(\epsilon)}{\longrightarrow} H$ as a continuous family of  Lie groups
 $\left\{G_{\epsilon}\right\}_{\epsilon \in [0,1]}$,
such that for  $\epsilon \in (0,1]$,  $G_{\epsilon}$  is isomorphic to $EA$ and and  $G_0=H$. More specifically, for
$\epsilon \in (0,1]$ let $G_{\epsilon}$ be the group with $\mathbb{R}\times \mathbb{R}^2 $  as the underlying smooth manifold and a product that for any $x,y \in \mathbb{R}\times \mathbb{R}^2 $ is given by
\begin{equation}\nonumber
x \cdot_{\epsilon} y =  P_{\epsilon}^{-1}(P_{\epsilon}(x)\cdot P_{\epsilon}(y))
\end{equation}
where the product on the right hand side is taking place in $EA$. As we already saw,
\begin{equation}\nonumber
\lim_{\epsilon \longrightarrow 0^+}x \cdot_{\epsilon} y = x \cdot_{0} y
\end{equation} where the product on the right hand side is in $H$. Hence, we have a family of  Lie products over
the smooth manifold $\mathbb{R}\times \mathbb{R}^2 $ that vary continuously in $\epsilon$. In the following we use this continuous family of groups
in order to contract coherent states families and tight frames.

The map $P_{\epsilon}:G_{\epsilon} \longrightarrow EA$ is an isomorphism of Lie groups and hence
the map $\widetilde{\rho}_{\epsilon}^{a,b}=\rho_{\epsilon}^{a,b}\circ P_{\epsilon}$ defines a representation of $G_{\epsilon}$ on $L^2(\mathbb{R},dx)$.
For generic $\psi \in  L^2(\mathbb{R},dx)$, the isotropy subgroup of $G_{\epsilon}$ for the representation $\widetilde{\rho}_{\epsilon}^{a,b}$ is $Z(G_{\epsilon})=\left\{ \left( 0, \left(\begin{array}{c}
z\\
0\end{array}\right)\right)| z\in \mathbb{R}\right\}$, the center of  $G_{\epsilon}$.
Let $X_{\epsilon}$ denote the homogeneous space $G/G_{\epsilon}$ and let $\sigma_{\epsilon}:X_{\epsilon}\longrightarrow G_{\epsilon} $ be the global Borel section that is given by
\begin{equation}\nonumber
\sigma_{\epsilon}\left(\left( q, \left(\begin{array}{c}
z\\
p\end{array}\right)\right) Z(G_{\epsilon}) \right)=\left( q, \left(\begin{array}{c}
\frac{qp}{2}\\
p\end{array}\right)\right)
\end{equation}
As before, $\mathbb{R}^2$ parameterizes $X_{\epsilon}$ and hence also the CS according to
\begin{eqnarray}\nonumber
&&(q,p) \longmapsto |\psi_{\epsilon}^{a,b}(q,p)\rangle=\left(\widetilde{\rho}_{\epsilon}^{a,b} \left( q, \left(\begin{array}{c}
\frac{qp}{2}\\
p\end{array}\right)\right)\right)\psi
\end{eqnarray}
The invariant measure on $X_{\epsilon}$ with respect to the action of $G_{\epsilon}$ in the coordinates $(q,p)$ is easily calculated to be
\begin{eqnarray}\nonumber
\frac{e^{\epsilon q}-1}{\epsilon q}dqdp
\end{eqnarray}
And the corresponding resolution of the identity is given by
\begin{eqnarray}\label{5.14}\nonumber
\mathbb{I}=\frac{\|\psi\|_{L^2(\mathbb{R},dx)} ^2}{C_{\psi,\widetilde{\rho}_{\epsilon}^{a,b} }}\int_{\mathbb{R}^2} |\psi_{\epsilon}^{a,b}(q,p)\rangle \langle\psi_{\epsilon}^{a,b}(q,p)|    \frac{e^{\epsilon q}-1}{\epsilon q}dqdp
\end{eqnarray}
\begin{proposition}\label{n4}
For admissible nonzero $\psi\in V_{\epsilon}$
\begin{eqnarray}\nonumber
C_{\psi,\widetilde{\rho}_{\epsilon}^{a(\epsilon),b(\epsilon)} }=&&\frac{2\pi}{|A-b_0\epsilon|} \|\psi\|^2_{L^2(\mathbb{R},dx)} \int_{\mathbb{R}} |\psi(x) |^2   e^{\epsilon x} dx=\frac{2\pi}{|A-b_0\epsilon|} \|\psi\|^2_{L^2(\mathbb{R},dx)}\|\psi\|^2_{L^2(\mathbb{R},e^{\epsilon x}dx)}
\end{eqnarray}
\end{proposition}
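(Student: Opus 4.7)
The plan is to directly compute the admissibility integral
\[
C_{\psi,\widetilde{\rho}_{\epsilon}^{a(\epsilon),b(\epsilon)}}=\int_{\mathbb{R}^{2}}|\langle\psi|\psi_{\epsilon}^{a(\epsilon),b(\epsilon)}(q,p)\rangle|^{2}\frac{e^{\epsilon q}-1}{\epsilon q}\,dq\,dp
\]
by first writing the coherent state explicitly, then integrating out $p$ via a Plancherel-type argument, and finally exploiting an algebraic cancellation in $q$. First I would plug the section $\sigma_{\epsilon}(q,(qp/2,p))=(q,(qp/2,p))$ into $\widetilde{\rho}_{\epsilon}^{a,b}=\rho_{\epsilon}^{a,b}\circ P_{\epsilon}$. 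Using the explicit formula for $P_{\epsilon}$ and the formula for $\rho_{\epsilon}^{a,b}$ given in the excerpt, one finds
\[
|\psi_{\epsilon}^{a,b}(q,p)\rangle(x)=e^{iap}\,e^{ibp\,\tfrac{e^{-\epsilon q}-1}{-\epsilon q}\,e^{-\epsilon x}}\,\psi(q+x).
\]
The phase $e^{iap}$ drops out upon taking modulus, so
\[
|\langle\psi|\psi_{\epsilon}^{a,b}(q,p)\rangle|^{2}=\Bigl|\int_{\mathbb{R}}\overline{\psi(x)}\psi(q+x)\,e^{i\xi e^{-\epsilon x}}dx\Bigr|^{2},\qquad \xi:=bp\tfrac{e^{-\epsilon q}-1}{-\epsilon q}.
\]

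Next I would perform the change of variable $u=e^{-\epsilon x}$ on $(0,\infty)$, which turns the inner integral into a (one-sided) Fourier integral in the variable $\xi$, conjugate to $u$. Substituting $\xi$ for $p$ in the outer integral (with Jacobian $|dp/d\xi|=\tfrac{\epsilon q}{|b|(1-e^{-\epsilon q})}$, positive for every $q\neq 0$) and applying Plancherel's theorem in $\xi$, the $p$-integral collapses to
\[
\int_{\mathbb{R}}|\langle\psi|\psi_{\epsilon}^{a,b}(q,p)\rangle|^{2}\,dp=\frac{2\pi}{|b|}\cdot\frac{q}{1-e^{-\epsilon q}}\int_{\mathbb{R}}|\psi(x)|^{2}|\psi(q+x)|^{2}e^{\epsilon x}dx.
\]
Multiplying by the measure factor $\tfrac{e^{\epsilon q}-1}{\epsilon q}$ produces the happy cancellation
\[
\frac{e^{\epsilon q}-1}{\epsilon q}\cdot\frac{q}{1-e^{-\epsilon q}}=\frac{e^{\epsilon q}}{\epsilon},
\]
which is the key algebraic simplification that makes the remaining double integral tractable.

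Finally I would apply Fubini and the substitution $y=q+x$ in the inner $q$-integral. The factor $e^{\epsilon x}e^{\epsilon q}$ becomes $e^{\epsilon y}$, and the double integral decouples into the product $\|\psi\|^{2}_{L^{2}(dx)}\int|\psi(y)|^{2}e^{\epsilon y}dy$. Putting this together gives
\[
C_{\psi,\widetilde{\rho}_{\epsilon}^{a,b}}=\frac{2\pi}{|b|\epsilon}\,\|\psi\|^{2}_{L^{2}(\mathbb{R},dx)}\,\|\psi\|^{2}_{L^{2}(\mathbb{R},e^{\epsilon x}dx)},
\]
and substituting $b=b(\epsilon)=b_{0}-A/\epsilon$ yields $|b|\epsilon=|A-b_{0}\epsilon|$, which is the claimed formula.

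The main obstacle is keeping signs and absolute values straight in the $\xi$-substitution (both $b$ and $q$ can have either sign), and justifying Fubini and the use of Plancherel for an integrand that is not a priori in $L^{1}\cap L^{2}$: one should verify admissibility of $\psi$ up front, or alternatively work first with $\psi$ supported in a bounded set (as in the later frame constructions) and approximate. Once the absolute values are handled by the identity $|\epsilon q/(e^{-\epsilon q}-1)|=\epsilon q/(1-e^{-\epsilon q})>0$ and the relevant integrands are shown to be finite, the remaining computation is the cancellation-plus-Fubini argument outlined above.
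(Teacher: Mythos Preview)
Your proposal is correct and follows essentially the same route as the paper's proof in the appendix. The paper first passes to the coordinates $(\alpha,\beta)=(e^{-\epsilon q},p\,\tfrac{e^{-\epsilon q}-1}{-\epsilon q})$, expands $|\langle\psi|\psi_{\epsilon}^{a,b}(q,p)\rangle|^{2}$ as a double integral in $x,y$, and integrates out $\beta$ using the distributional identity $\int_{\mathbb{R}}e^{i\beta b(X-Y)}d\beta=\tfrac{2\pi}{|b|}\delta(X-Y)$ (with $X=e^{-\epsilon x}$, $Y=e^{-\epsilon y}$), after which the same decoupling substitution $s=x+t$ finishes the computation; your use of Plancherel in the variable $\xi$ is simply the rigorous version of that delta-function step, and your cancellation $\tfrac{e^{\epsilon q}-1}{\epsilon q}\cdot\tfrac{q}{1-e^{-\epsilon q}}=\tfrac{e^{\epsilon q}}{\epsilon}$ corresponds exactly to the paper's simplification of the Jacobian factors to $\tfrac{1}{\epsilon\alpha^{2}}$.
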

\noindent The proof can be obtained by a change of variables.
\begin{theorem}
\textbf{$[$Contraction of coherent states$]$} 
For any $x\in \mathbb{R}$, $(q,p)\in \mathbb{R}^2$
\begin{eqnarray}\nonumber
&&\lim_{\epsilon \longrightarrow 0^+}| \psi_{\epsilon}^{{a(\epsilon),b(\epsilon)}}(q,p) \rangle(x) =| \psi_{\sigma_{H}(q,p)}^{\eta^{A,B}} \rangle(x)
\end{eqnarray}
and
\begin{eqnarray}\nonumber
&&\lim_{\epsilon \longrightarrow 0^+}| \psi_{\epsilon}^{{a(\epsilon),b(\epsilon)}}(q,p) \rangle =| \psi_{\sigma_{H}(q,p)}^{\eta^{A,B}} \rangle
\end{eqnarray}
i.e., the convergence is pointwise and also in  $L^2(\mathbb{R},dx)$.
\label{Proposition5}
\end{theorem}

\begin{proof}
For any $x\in \mathbb{R}$, $(q,p)\in \mathbb{R}^2$ we have
\begin{eqnarray}\nonumber
&&\lim_{\epsilon \longrightarrow 0^+}| \psi_{\epsilon}^{{a(\epsilon),b(\epsilon)}}(q,p) \rangle (x)=\lim_{\epsilon \longrightarrow 0^+}\left(\widetilde{\rho}_{\epsilon}^{a(\epsilon),b(\epsilon)} \circ  \sigma_{\epsilon}\left( q, \left(\begin{array}{c}
0\\
p\end{array}\right)\right)\right)\psi(x) =\\ \nonumber
&&\lim_{\epsilon \longrightarrow 0^+}\left(\rho_{\epsilon}^{a(\epsilon),b(\epsilon)} \circ  P_{\epsilon}\left( q, \left(\begin{array}{c}
\frac{qp}{2}\\
p\end{array}\right)\right)\right)\psi(x) \underbrace{=}_{\text{Prop.}(\ref{Prop2})}\eta^{A,B}\left( q, \left(\begin{array}{c}
\frac{qp}{2}\\
p\end{array}\right)\right)\psi(x) =| \psi_{\sigma_{H}(q,p)}^{\eta^{A,B}} \rangle (x)
\end{eqnarray}
The convergence in norm follows by the same argument that was used in the proof of Proposition \ref{Prop2}.
\end{proof}

\subsection{Contraction of the resolution of the identity}
\noindent For any $\epsilon\in (0,1]$ we have the resolution of the identity that is associated with a representations of $G_{\epsilon} (\simeq EA)$:
{\footnotesize \begin{eqnarray}\nonumber
&&\mathbb{I}=\frac{\|\psi\|_{L^2(\mathbb{R},dx)}^2}{C_{\psi,\widetilde{\rho}_{\epsilon}^{a(\epsilon),b(\epsilon)} }}\int_{\mathbb{R}^2} |\psi_{\epsilon}^{a(\epsilon),b(\epsilon)}(q,p)\rangle \langle\psi_{\epsilon}^{a(\epsilon),b(\epsilon)}(q,p)|    \frac{e^{\epsilon q}-1}{\epsilon q}dqdp\\ \nonumber
&&=\frac{|b(\epsilon)|}{2\pi \|\psi\|_{L^2(\mathbb{R},e^{\epsilon x}dx)}^2}\int_{\mathbb{R}^2} |\psi_{\epsilon}^{a(\epsilon),b(\epsilon)}(q,p)\rangle \langle\psi_{\epsilon}^{a(\epsilon),b(\epsilon)}(q,p)|    \frac{e^{\epsilon q}-1}{\epsilon q}dqdp
\end{eqnarray}}
We would like to interpret its limit when $\epsilon$ goes to zero as
\begin{eqnarray}\nonumber
&\mathbb{I}=&\frac{\|\psi\|_{L^2(\mathbb{R},dx)}^2}{C_{\psi,\eta^{A,B} }} \int_{\mathbb{R}^2} |\psi_{\sigma_H(q,p)}\rangle \langle \psi_{\sigma_H(q,p)} |dqdp=
\frac{|A|}{2\pi \|\psi\|_{L^2(\mathbb{R},dx)}^2} \int_{\mathbb{R}^2} |\psi_{\sigma_H(q,p)}\rangle \langle \psi_{\sigma_H(q,p)} |dqdp
\end{eqnarray}
which is a resolution of the identity that is associated with a representation of $H$.
We formulate it in the following proposition:
\begin{proposition}\label{Prop4}
\textbf{$[$Contraction of resolutions of the identity$]$}
For any $f,g\in L^2(\mathbb{R},dx)$
\begin{eqnarray}\nonumber
\lim_{\epsilon \longrightarrow 0^+} &&\int_{\mathbb{R}^2} \langle f|\psi_{\epsilon}^{a(\epsilon),b(\epsilon)}(q,p)\rangle \langle\psi_{\epsilon}^{a(\epsilon),b(\epsilon)}(q,p)|g\rangle    \frac{e^{\epsilon q}-1}{\epsilon q}dqdp\\ \label{n1}
&&= \int_{\mathbb{R}^2} \langle f|\psi^{\eta^{A,B}}_{\sigma_H(q,p)}\rangle \langle \psi^{\eta^{A,B}}_{\sigma_H(q,p)}|g\rangle dqdp
\end{eqnarray}
\end{proposition}
\begin{proof}
Since \begin{eqnarray}\nonumber
\lim_{\epsilon \longrightarrow0^+}\frac{e^{\epsilon q}-1}{\epsilon q} =1,
\end{eqnarray}
then Proposition \ref{Proposition5} implies
 Proposition \ref{Prop4}  provided that we can interchange the limit and the integral in (\ref{n1}). We  prove Proposition \ref{Prop4} using a  more direct approach. We note that Proposition \ref{Prop4} holds if and only if
\begin{eqnarray}\nonumber
\lim_{\epsilon \longrightarrow 0^+} C_{\psi,\widetilde{\rho}_{\epsilon}^{a(\epsilon),b(\epsilon)} }=C_{\psi,\eta^{A,B} }
\end{eqnarray}
By Proposition \ref{n4} we have
\begin{eqnarray}\nonumber
&&C_{\psi,\widetilde{\rho}_{\epsilon}^{a(\epsilon),b(\epsilon)} }=2\pi  \|\psi\|_{L^2(\mathbb{R},dx)}  \int_{\mathbb{R}}    \frac{|\psi(x)|^2}{|A-b_0\epsilon|} e^{\epsilon x}dx
\end{eqnarray}
and by the monotone convergence theorem we obtain
\begin{eqnarray}\nonumber
\lim_{\epsilon \longrightarrow 0^+}C_{\psi,\widetilde{\rho}_{\epsilon}^{a(\epsilon),b(\epsilon)} }=&&2\pi  \|\psi\|_{L^2(\mathbb{R},dx)}   \int_{\mathbb{R}}\lim_{\epsilon \longrightarrow 0^+}  \frac{|\psi(x)|^2}{|A-b_0\epsilon|} e^{\epsilon x}dx=2\pi \frac{\|\psi\|_{L^2(\mathbb{R},dx)}^4}{|A|}=C_{\psi,\eta^{A,B} }
\end{eqnarray}
\end{proof}
\subsection{Frames with compactly supported fiducial states and their  contractions}
\noindent In this section, we  construct tight frames for the unitary irreducible representations of the extended affine group. We show how the tight frames of the extended affine
group contract (in a sense that is explained below) to tight frames of the Heisenberg group.\newline
\begin{theorem}
\textbf{$[$Tight frames for $EA$$]$}
Under the assumptions of Theorem \ref{pr1},  for any $\epsilon \in (0,1] $ let $EA_{q_0,p_0}(\epsilon)$ be the
discrete subset of  $EA$ that is defined by $\left\{ \left(\alpha_n(\epsilon), \beta_{mn}(\epsilon), \gamma_{mn}(\epsilon) \right) |m,n \in \mathbb{Z}\right\}$ where
\begin{eqnarray}\nonumber
&& \alpha_n(\epsilon)=e^{-\epsilon nq_0}\\ \nonumber
&& \beta_{mn}(\epsilon)=-\frac{2\pi \alpha_n(\epsilon)  m}{b(\epsilon)(e^{\epsilon L}-e^{-\epsilon L})}=-\frac{e^{-\epsilon nq_0}2\pi    m}{b(\epsilon)(e^{\epsilon L}-e^{-\epsilon L})}\\ \nonumber
&& \gamma_{mn}(\epsilon)=\beta_{mn}(\epsilon)\frac{\ln \alpha_n(\epsilon)}{\alpha_n(\epsilon)-1}=\frac{e^{-\epsilon nq_0}2\pi   m}{b(\epsilon)(e^{\epsilon L}-e^{-\epsilon L})}\frac{\epsilon nq_0}{e^{-\epsilon nq_0}-1}.
\end{eqnarray}
The sequence $\left\{ \left|\psi^{\epsilon,a(\epsilon),b(\epsilon)}_{(n,m)} \right\rangle \right\}_{m,n \in \mathbb{Z}}$, where
\begin{eqnarray}\nonumber
&&\left|\psi^{\epsilon,a(\epsilon),b(\epsilon)}_{(n,m)} \right\rangle=\rho^{a(\epsilon),b(\epsilon)}_{\epsilon}(\alpha_n(\epsilon), \beta_{mn}(\epsilon), \gamma_{mn}(\epsilon))\left( Q_{\epsilon}\psi \right)
\end{eqnarray}
with $Q_{\epsilon}(x)=e^{-\frac{1}{2}\epsilon x}$, constitutes a tight frame with frame constant that is equal to $2\frac{\sinh (\epsilon L)}{\epsilon}\chi$.
\label{propo7}
\end{theorem}
\begin{remark}
We refer to $Q_{\epsilon}(x)\psi(x)$ as the new mother wavelet (fiducial state). We emphasize that if one uses $\psi(x)$ as a mother wavelet instead of
$Q_{\epsilon}(x)\psi(x)$, then $\psi(x)$ does not generate a tight frame for $\epsilon \neq 0$.
\end{remark}
\begin{proof}
Let $f \in L^2(\mathbb{R},dx)$. Using the change of variables $y=e^{-\epsilon x}$ and then $z={y}{\alpha_n(\epsilon)}$ we obtain
\begin{eqnarray}\nonumber
\nonumber
&&\sum_{n,m\in \mathbb{Z}}\left|\langle f| \psi^{\epsilon,a(\epsilon),b(\epsilon)}_{(n,m)}\rangle\right|^2=\sum_{n,m\in \mathbb{Z}}\left|\int_{\mathbb{R}} \overline{f(x)}e^{ib(\epsilon)\beta_{mn}(\epsilon)e^{-\epsilon x}}e^{-\frac{\epsilon}{2}(x-\frac{\ln \alpha_n(\epsilon)}{\epsilon})}\psi(x-\frac{\ln \alpha_n(\epsilon)}{\epsilon})dx \right|^2\\ \nonumber
&&=\sum_{n,m\in \mathbb{Z}}\left|\int_{\mathbb{R}} \overline{f(x)}e^{ib(\epsilon)\beta_{mn}(\epsilon)e^{-\epsilon x}}e^{-\frac{\epsilon}{2}(x+nq_0)}\psi(x+nq_0)dx \right|^2=\\ \nonumber
&&\sum_{n,m\in \mathbb{Z}}\left|\int_{\mathbb{R}^+} \overline{f(-\frac{\ln y}{\epsilon})}e^{ib(\epsilon)\beta_{mn}(\epsilon)y}e^{-\frac{\epsilon}{2}(-\frac{\ln y}{\epsilon}+nq_0)}\psi(-\frac{\ln y}{\epsilon}+nq_0)\frac{dy}{\epsilon y} \right|^2=\\ \nonumber
&&\sum_{n,m\in \mathbb{Z}}\left|\int_{\mathbb{R}^+} \overline{f(-\frac{\ln y}{\epsilon})}e^{ib(\epsilon)\beta_{mn}(\epsilon)y}e^{-\frac{\epsilon}{2}(nq_0)}\psi(-\frac{\ln y}{\epsilon}+nq_0)\frac{dy}{\epsilon \sqrt{y}} \right|^2=\\ \nonumber
&&\sum_{n,m\in \mathbb{Z}}\left|\int_{e^{-\epsilon L}}^{e^{\epsilon L}} \overline{f(-\frac{\ln (z\alpha^{-1}_n(\epsilon))}{\epsilon})}e^{ib(\epsilon)\beta_{mn}(\epsilon)\frac{z}{\alpha_n(\epsilon)}}e^{-\frac{\epsilon}{2}(nq_0)}\psi(-\frac{\ln z}{\epsilon})\frac{dz}{\epsilon \sqrt{\alpha_n(\epsilon) z}} \right|^2=\\ \nonumber
&&\sum_{n,m\in \mathbb{Z}}\left|\int_{e^{-\epsilon L}}^{e^{\epsilon L}} \overline{f(-\frac{\ln (z\alpha^{-1}_n(\epsilon))}{\epsilon})}e^{-\frac{2\pi    m}{e^{\epsilon L}-e^{-\epsilon L}}z}e^{-\frac{\epsilon}{2}(nq_0)}\psi(-\frac{\ln z}{\epsilon})\frac{dz}{\epsilon \sqrt{\alpha_n(\epsilon) z}} \right|^2
\end{eqnarray} 
Using Parseval's theorem we obtain that the above expression is equal to
\begin{eqnarray}\nonumber
&&2\sinh (\epsilon L) \sum_{n\in \mathbb{Z}}\int_{e^{-\epsilon L}}^{e^{\epsilon L}} \left| \overline{f(-\frac{\ln (z\alpha^{-1}_n(\epsilon))}{\epsilon})}\frac{e^{-\frac{\epsilon}{2}(nq_0)}\psi(-\frac{\ln z}{\epsilon})}{\epsilon \sqrt{\alpha_n(\epsilon)z}}\right|^2 dz\\ \nonumber
&&=2\frac{\sinh (\epsilon L)}{\epsilon} \sum_{n\in \mathbb{Z}}\int_{\mathbb{R}^+}\left| \overline{f(-\frac{\ln (z\alpha^{-1}_n(\epsilon))}{\epsilon})}\psi(-\frac{\ln z}{\epsilon})\right|^2\frac{1}{\epsilon z} dz
\end{eqnarray}
Applying the change of variables $y=\frac{z}{\alpha_n(\epsilon)}$  and then $x=-\frac{\ln y}{\epsilon}$ we obtain that the above expression is equal to
\begin{eqnarray}\nonumber
&&2\frac{\sinh (\epsilon L)}{\epsilon} \sum_{n\in \mathbb{Z}}\int_{\mathbb{R}^+} \left| \overline{f(-\frac{\ln y}{\epsilon})}\psi(-\frac{\ln ( y\alpha_n(\epsilon))}{\epsilon})\right|^2\frac{1}{\epsilon y} dy=\\ \nonumber
&&2\frac{\sinh (\epsilon L)}{\epsilon} \sum_{n\in \mathbb{Z}}\int_{\mathbb{R}^+} \left| \overline{f(-\frac{\ln y}{\epsilon})}\psi(-\frac{\ln y}{\epsilon}+nq_0)\right|^2\frac{1}{\epsilon y} dy=\\ \nonumber
&&2\frac{\sinh (\epsilon L)}{\epsilon} \sum_{n\in \mathbb{Z}}\int_{\mathbb{R}}\left| \overline{f(x)}\psi(x+nq_0)\right|^2dx=2\frac{\sinh (\epsilon L)}{\epsilon} \int_{\mathbb{R}}\left|{f(x)}\right|^2 \sum_{n\in \mathbb{Z}}\left|\psi(x+nq_0)\right|^2dx=\\ \nonumber
&& 2\chi \frac{\sinh (\epsilon L)}{\epsilon}\int_{\mathbb{R}} \left|f(x)\right|^2dx =2\chi \frac{\sinh (\epsilon L)}{\epsilon} \|f\|^2_{L^2(\mathbb{R},dx)}
\end{eqnarray}
The interchange of the order of the  sum and the integral  is justified by the compactness of the support of $\psi$.
\end{proof}

\begin{corollary}
\textbf{$[$Contraction of tight frames$]$} 
For any $f\in L^2(\mathbb{R},dx)$
\begin{eqnarray}\nonumber
&&\lim_{\epsilon \longrightarrow 0^+}\sum_{n,m\in \mathbb{Z}} \left|\langle f| \psi^{\epsilon,a(\epsilon),b(\epsilon)}_{(n,m)}\rangle\right|^2=\sum_{n,m\in \mathbb{Z}}\left|\langle f| \psi^{A,B}_{(n,m)}\rangle\right|^2
\end{eqnarray}
\end{corollary}
\begin{proof}
\begin{eqnarray}\nonumber
\lim_{\epsilon \longrightarrow 0^+}&&\sum_{n,m\in \mathbb{Z}} \left|\langle f| \psi^{\epsilon,a(\epsilon),b(\epsilon)}_{(n,m)}\rangle\right|^2=\lim_{\epsilon \longrightarrow 0^+}2\chi \frac{\sinh (\epsilon L)}{\epsilon} \|f\|^2_{L^2(\mathbb{R},dx)}=\\ \nonumber
&&2\chi L \|f\|^2_{L^2(\mathbb{R},dx)}= \sum_{n,m\in \mathbb{Z}}\left|\langle f| \psi^{A,B}_{(n,m)}\rangle\right|^2
\end{eqnarray}
\end{proof}
\begin{corollary}
\textbf{$[$Contraction of frame expansion$]$} 
For any $f\in L^2(\mathbb{R},dx)$ and any $\epsilon \in (0,1]$
\begin{eqnarray}\nonumber
f=&&\sum_{n,m\in \mathbb{Z}}\frac{\epsilon} {2\chi \sinh (\epsilon  L)} \left(\langle  \psi^{\epsilon,a(\epsilon),b(\epsilon)}_{(n,m)}|f\rangle \right) |\psi^{\epsilon,a(\epsilon),b(\epsilon)}_{(n,m)}\rangle=\sum_{n,m\in \mathbb{Z}}\frac{1} {2\chi L} \left(\langle  \psi^{A,B}_{(n,m)}|f\rangle \right) |\psi^{A,B}_{(n,m)}\rangle
\end{eqnarray}
where the equality is in $L^2(\mathbb{R},dx)$.\label{cor6.2}
\end{corollary}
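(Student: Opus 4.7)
The plan is to deduce both equalities directly from the tight frame results already proved; no further limit analysis is needed at this point. Proposition \ref{propo7} asserts that, for each $\epsilon\in(0,1]$, the sequence $\{|\psi^{\epsilon,a(\epsilon),b(\epsilon)}_{(n,m)}\rangle\}_{n,m\in\mathbb{Z}}$ is a tight frame in $L^2(\mathbb{R},dx)$ with frame constant $\frac{2\chi\sinh(\epsilon L)}{\epsilon}$, and Proposition \ref{pr1} gives the analogous tight frame $\{|\psi^{A,B}_{(n,m)}\rangle\}_{n,m\in\mathbb{Z}}$ with frame constant $2\chi L$. By the standard reconstruction formula recalled in Section \ref{s2}, any tight frame $\{\psi_n\}$ with constant $A$ yields $|v\rangle=\frac{1}{A}\sum_n\langle\psi_n|v\rangle|\psi_n\rangle$ with convergence in $L^2(\mathbb{R},dx)$.

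First I would substitute $A=\frac{2\chi\sinh(\epsilon L)}{\epsilon}$ into this formula using the extended affine frame to obtain the first equality
$$f=\sum_{n,m\in\mathbb{Z}}\frac{\epsilon}{2\chi\sinh(\epsilon L)}\langle\psi^{\epsilon,a(\epsilon),b(\epsilon)}_{(n,m)}|f\rangle\,|\psi^{\epsilon,a(\epsilon),b(\epsilon)}_{(n,m)}\rangle.$$
Next I would substitute $A=2\chi L$ using the Heisenberg frame of Proposition \ref{pr1} to obtain
$$f=\sum_{n,m\in\mathbb{Z}}\frac{1}{2\chi L}\langle\psi^{A,B}_{(n,m)}|f\rangle\,|\psi^{A,B}_{(n,m)}\rangle.$$
The chain of equalities displayed in the corollary follows because both right-hand sides equal $f$ in $L^2(\mathbb{R},dx)$.

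There is essentially no obstacle: each reconstruction holds as an honest $L^2$-convergent series for every fixed $\epsilon\in(0,1]$, so one never needs to justify an interchange of the limit $\epsilon\to 0^+$ with the infinite sum. The genuine contraction content of the paper—the pointwise and norm convergence $|\psi^{\epsilon,a(\epsilon),b(\epsilon)}_{(n,m)}\rangle\to|\psi^{A,B}_{(n,m)}\rangle$ and the matching convergence of the frame coefficients—has been established in Propositions \ref{Prop2} and \ref{Proposition5} and in the preceding corollaries; the present corollary simply records that the two \emph{full} frame reconstructions, valid for different values of $\epsilon$, necessarily produce the same $L^2$ function, namely $f$.
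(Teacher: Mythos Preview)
Your proposal is correct and matches the paper's approach: the corollary is stated in the paper without proof precisely because it is an immediate consequence of the tight frame reconstruction formula from Section~\ref{s2} applied separately to the frames of Proposition~\ref{propo7} (frame constant $2\chi\sinh(\epsilon L)/\epsilon$) and Proposition~\ref{pr1} (frame constant $2\chi L$). Your observation that no limit interchange is needed here is exactly the point.
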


\begin{proposition}
\textbf{$[$Contraction of  overcomplete frame bases$]$} 
 For any $x\in \mathbb{R}$, $(q,p)\in \mathbb{R}^2$
\begin{eqnarray}
&&\lim_{\epsilon \longrightarrow 0^+}|\psi^{\epsilon,a(\epsilon),b(\epsilon)}_{(n,m)} \rangle(x) =| \psi^{A,B}_{(n,m)} \rangle(x)
\end{eqnarray}
and
\begin{eqnarray}\label{5.31}
&&\lim_{\epsilon \longrightarrow 0^+}|\psi^{\epsilon,a(\epsilon),b(\epsilon)}_{(n,m)} \rangle =| \psi^{A,B}_{(n,m)} \rangle
\end{eqnarray}
i.e., the convergence is pointwise and in  norm.
\label{propIV5}
\end{proposition}
\begin{proof}
We observe that
\begin{eqnarray}\nonumber
&&|\psi^{\epsilon,a(\epsilon),b(\epsilon)}_{(n,m)} \rangle(x) =\rho^{a(\epsilon),b(\epsilon)}_{\epsilon}(\alpha_n(\epsilon), \beta_{mn}(\epsilon), \gamma_{mn}(\epsilon))\left( Q_{\epsilon}\psi \right)(x)\\ \nonumber
&&=e^{ia(\epsilon)\gamma_{mn}(\epsilon)}e^{ib(\epsilon)\beta_{mn}(\epsilon)e^{-\epsilon x}}e^{-\frac{\epsilon}{2}(x+nq_0)}\psi(x+nq_0)=
\end{eqnarray}
\begin{widetext}
\begin{eqnarray}\nonumber
&&\exp \left\{ i\beta_{mn}(\epsilon)\left(a(\epsilon)\frac{\ln \alpha_n(\epsilon)}{\alpha_n(\epsilon)-1}+b(\epsilon)e^{-\epsilon x}   \right)\right\}=\\ \nonumber
&& \exp \left\{ ie^{-\epsilon nq_0}\frac{-2\pi    m}{(-\frac{A}{\epsilon}+b_0)(e^{\epsilon L}-e^{-\epsilon L})}\left((\frac{A}{\epsilon}+a_0)\frac{-\epsilon nq_0}{e^{-\epsilon nq_0}-1}+(-\frac{A}{\epsilon}+b_0)e^{-\epsilon x}   \right)\right\}
\end{eqnarray}
\end{widetext}
and since
\begin{eqnarray}\nonumber
&& \lim_{\epsilon \longrightarrow 0^+}(-\frac{A}{\epsilon}+b_0)(e^{\epsilon L}-e^{-\epsilon L})=\lim_{\epsilon \longrightarrow 0^+}(-\frac{A}{\epsilon}+b_0)(2\epsilon L+o(\epsilon))=-2AL
\end{eqnarray}
then 
\begin{eqnarray}\nonumber
&& \lim_{\epsilon \longrightarrow 0^+} \left((\frac{A}{\epsilon}+a_0)\frac{-\epsilon nq_0}{e^{-\epsilon nq_0}-1}+(-\frac{A}{\epsilon}+b_0)e^{-\epsilon x}   \right)=\\ \nonumber
&& \lim_{\epsilon \longrightarrow 0^+} \left((\frac{A}{\epsilon}+a_0)(1-\frac{\epsilon nq_0}{2})+(-\frac{A}{\epsilon}+b_0)(1-\epsilon x)   \right)\\ \nonumber
&& =a_0+b_0+\frac{A nq_0}{2}+Ax=B+\frac{A nq_0}{2}+Ax
\end{eqnarray}
and
\begin{eqnarray}\nonumber
&&\lim_{\epsilon \longrightarrow 0^+}|\psi^{\epsilon,a(\epsilon),b(\epsilon)}_{(n,m)} \rangle(x)= e^{i\frac{\pi m}{AL}(B+\frac{A nq_0}{2}+Ax)} \psi(x+nq_0)= e^{ip_0m(B+\frac{A nq_0}{2}+Ax)} \psi(x+nq_0)=| \psi^{A,B}_{(n,m)} \rangle(x)
\end{eqnarray}
Equation \ref{5.31} follows from the dominated convergence theorem.
\end{proof}
\begin{corollary}
\textbf{$[$Contraction of truncated frame expansions$]$} 
for any $f\in L^2(\mathbb{R},dx)$ and $N_1,N_2,M_1,M_2\in \mathbb{Z}$
\begin{eqnarray}\label{5.37}\nonumber
\lim_{\epsilon \longrightarrow 0^+}&&\sum_{n=N_1}^{n=N_2}\sum_{m=M_1}^{m=M_2}\frac{\epsilon} {2\chi \sinh (\epsilon  L)} \left(\langle  \psi^{\epsilon,a(\epsilon),b(\epsilon)}_{(n,m)}|f\rangle \right) |\psi^{\epsilon,a(\epsilon),b(\epsilon)}_{(n,m)}\rangle =\sum_{n=N_1}^{n=N_2}\sum_{m=M_1}^{m=M_2}\frac{1} {2\chi L} \left(\langle  \psi^{A,B}_{(n,m)}|f\rangle \right) |\psi^{A,B}_{(n,m)}\rangle
\end{eqnarray}
where the equality is in $L^2(\mathbb{R},dx)$ and pointwise.
\end{corollary}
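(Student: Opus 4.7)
The crucial observation is that the sums in this corollary are \emph{finite} (running from $N_1$ to $N_2$ and from $M_1$ to $M_2$), so no dominated-convergence argument is required: linearity of limits in a Banach space reduces the task to a term-by-term verification for each fixed pair $(n,m)$.

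My plan is to show that, for every fixed $(n,m)$,
\begin{equation*}
\frac{\epsilon}{2\chi\sinh(\epsilon L)}\,\bigl\langle \psi^{\epsilon,a(\epsilon),b(\epsilon)}_{(n,m)}\,\big|\,f\bigr\rangle\,\bigl|\psi^{\epsilon,a(\epsilon),b(\epsilon)}_{(n,m)}\bigr\rangle \longrightarrow \frac{1}{2\chi L}\,\bigl\langle \psi^{A,B}_{(n,m)}\,\big|\,f\bigr\rangle\,\bigl|\psi^{A,B}_{(n,m)}\bigr\rangle
\end{equation*}
both in $L^2(\mathbb{R},dx)$ and pointwise. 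This involves three independent convergences that combine multiplicatively. First, the scalar prefactor: by the standard limit $\sinh(\epsilon L)/(\epsilon L)\to 1$, we get $\epsilon/(2\chi\sinh(\epsilon L))\to 1/(2\chi L)$. Second, the inner product (a scalar in $\mathbb{C}$): the preceding proposition gives $\|\psi^{\epsilon,a(\epsilon),b(\epsilon)}_{(n,m)}-\psi^{A,B}_{(n,m)}\|_{L^2}\to 0$, so Cauchy--Schwarz yields $|\langle \psi^{\epsilon,a(\epsilon),b(\epsilon)}_{(n,m)}-\psi^{A,B}_{(n,m)}\,|\,f\rangle|\le \|\psi^{\epsilon,a(\epsilon),b(\epsilon)}_{(n,m)}-\psi^{A,B}_{(n,m)}\|\,\|f\|\to 0$, hence the bra-ket scalar converges. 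Third, the vector factor converges in $L^2$ and pointwise by the same preceding proposition.

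Once these three facts are in hand, I combine them via the elementary estimate $\|c_\epsilon v_\epsilon-cv\|\le |c_\epsilon|\,\|v_\epsilon-v\|+|c_\epsilon-c|\,\|v\|$ with $c_\epsilon$ being the product of the scalar prefactor and the inner product, and $v_\epsilon=|\psi^{\epsilon,a(\epsilon),b(\epsilon)}_{(n,m)}\rangle$. Both summands on the right vanish in the limit, giving $L^2$-convergence of the individual term. The pointwise statement follows by the same algebraic manipulation applied to values at a fixed $x$, using the pointwise half of the preceding proposition in place of the norm half. Finally, summing the convergences over the finite index set $\{N_1,\ldots,N_2\}\times\{M_1,\ldots,M_2\}$ preserves convergence in both the $L^2$-norm and the pointwise senses, which yields equation (\ref{5.37}).

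Because the sums are finite, there is in fact no genuine obstacle here; the proof is essentially an assembly of the previous convergence results. The only mildly delicate point is noting that convergence of the inner product is not automatic from weak convergence and must be derived from the norm convergence of $|\psi^{\epsilon,a(\epsilon),b(\epsilon)}_{(n,m)}\rangle$ via Cauchy--Schwarz, but this is immediate. I would therefore write the proof as a brief two-line display exhibiting the three limits and invoking the product rule for limits in $L^2$.
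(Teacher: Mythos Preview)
Your proposal is correct and matches the paper's approach: the paper gives no explicit proof for this corollary, treating it as an immediate consequence of the preceding proposition (the $L^2$ and pointwise convergence of each $|\psi^{\epsilon,a(\epsilon),b(\epsilon)}_{(n,m)}\rangle$ to $|\psi^{A,B}_{(n,m)}\rangle$) together with the elementary limit $\epsilon/\sinh(\epsilon L)\to 1/L$. Your write-up spells out exactly the term-by-term argument the paper leaves implicit, and there is nothing to add.
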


\section{Frames with band limited fiducial states and their  contractions}\label{sec6}
\noindent In this section we construct another realization of the family of representations $\left\{\rho^{a(\epsilon),b(\epsilon)}_{\epsilon}\right\}_{\epsilon\in (0,1]}$
which also contracts to the desired representation of the Heisenberg group. This realization enables us to construct families of tight frames  based on functions
with compactly supported  Fourier transform. Let us recall how we constructed the representation  $\rho^{a,b}_{\epsilon}:EA\longrightarrow \mathcal{U}(L^2(\mathbb{R},dx))$. 
In section \ref{secb} we have defined 
\begin{eqnarray}\nonumber
&& \rho_{\epsilon}^{a,b}:EA \longrightarrow \mathcal{U}(V_{\epsilon})\\ \nonumber
&& (\alpha, \beta, \gamma) \longmapsto T_{\epsilon} \circ (U^b\otimes \chi_{a})(\alpha, \beta, \gamma) \circ T^{-1}_{\epsilon}
\end{eqnarray}
where $T_{\epsilon}=S_1(\epsilon)\circ S_2 \circ S_1$ goes from   $\mathcal{H}=\left\{f\in L^2(\mathbb{R},dx)|\mathcal{F}(f)(x)=0, \forall x <0 \right\}$ onto $L^2(\mathbb{R},dx)$. 
 Let $I$ denote the unitary operator on $L^2(\mathbb{R})$
that is given by   $T^{-1}_{\epsilon=1}=S_1^{-1}\circ S_2^{-1}\circ S^{-1}_3(1)$. We define $\pi_{\epsilon}^{a,b}:EA \longrightarrow GL(\mathcal{H})$ to be the representation that is given by $I\circ \rho_{\epsilon}^{a,b}\circ I^{-1}$; explicitly we have
\begin{eqnarray}\nonumber
& \pi_{\epsilon}^{a,b}&\left( \alpha, \beta, \gamma \right)(f)(x)=e^{ia\gamma}\alpha^{\frac{1}{2\epsilon}}
\mathcal{F}^{-1}\left(e^{ib\beta w^{\epsilon}}
\mathcal{F}(f)(\alpha^{\frac{1}{\epsilon}}w)\right)(x)\\ \nonumber
&&=\frac{e^{ia\gamma}\alpha^{\frac{1}{2\epsilon}}}{{2\pi}}\int_{\mathbb{R}}\int_{\mathbb{R}} f(t)e^{ib\beta w^{\epsilon}-i\alpha^{\frac{1}{\epsilon}}wt+iwx} dtdw
\end{eqnarray}
Using the known representation of the delta distribution, $\frac{1}{2\pi}\int_{\mathbb{R}}e^{ik(x-y)}dk=\delta(x-y)$
 we verify that $\pi_{\epsilon =1}^{a,b}=U^b\otimes \chi_{a}$ as follows
\begin{eqnarray}\nonumber
&&\pi_{1}^{a,b}\left( (\alpha, \beta, \gamma) \right)(f)(x)=\frac{e^{ia\gamma}\alpha^{\frac{1}{2}}}{{2\pi}}\int_{\mathbb{R}}\int_{\mathbb{R}} f(t)e^{ib\beta w-i\alpha wt+iwx} dtdw=\\ \nonumber
&&\frac{e^{ia\gamma}\alpha^{\frac{1}{2}}}{{2\pi}}\int_{\mathbb{R}}\int_{\mathbb{R}} f(t)e^{iw(b\beta -\alpha t+x)} dwdt=e^{ia\gamma}\alpha^{\frac{1}{2}}\int_{\mathbb{R}} f(t)\delta(b\beta -\alpha t+x)  dt=\\ \nonumber
&& e^{ia \gamma}\frac{1}{\sqrt{\alpha}}f(\frac{x+b\beta}{\alpha})=(U^b\otimes \chi_{a}((\alpha, \beta, \gamma))f)(x)
\end{eqnarray}
Let $\widetilde{\eta}^{A,B}$ denote the representation of $H$ that is given by $I \circ \eta^{A,B}\circ I^{-1}$.
A direct calculation shows that for any $\left(c,\left(
                                                 \begin{array}{c}
                                                   v_1 \\
                                                   v_2 \\
                                                 \end{array}
                                               \right)
\right) \in H$
\begin{eqnarray}\nonumber
&&\left(\widetilde{\eta}^{A,B}\left(c,\left(
                                                 \begin{array}{c}
                                                   v_1 \\
                                                   v_2 \\
                                                 \end{array}
                                               \right) \right)f\right)(x)=e^{i(Av_2+Bv_1)}f\left(-\log(c+e^{Av_2-x})\right)
\end{eqnarray}
\noindent The unitarity of $I$  allows us to translate statements regarding the family $ \left\{\rho_{\epsilon}^{a(\epsilon),b(\epsilon)}\right\}_{\epsilon \in (0,1]} $
and its contraction $\eta^{A,B}$ to the family $\left\{\pi_{\epsilon}^{a(\epsilon),b(\epsilon)}\right\}_{\epsilon \in (0,1]} $ and its contraction $\widetilde{\eta}^{A,B}$.
Here we only translate the statement with regards to the tight frame:
\begin{proposition}
\textbf{$[$band limited tight frames for $H$$]$}
Let $0\neq\psi \in L^2(\mathbb{R},dx)$ satisfy the assumptions of Theorem \ref{pr1}.
Then the sequence $\left\{\left| \widehat{\psi}^{A,B} _{(n,m)}\right\rangle\right\}_{n,m\in \mathbb{Z}}$, where
\begin{eqnarray}\nonumber
&&\left| \widehat{\psi}^{A,B} _{(n,m)}\right\rangle= \widetilde{\eta}^{A,B}\left(nq_0, \left(\begin{array}{c}
\frac{nq_0mp_0}{2}\\
mp_0\end{array}\right)\right)  (I\psi)
\end{eqnarray}
constitutes a tight frame with frame constant that is equal to $2L\chi $.
\label{prop9}
\end{proposition}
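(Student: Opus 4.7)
The plan is to reduce Proposition \ref{prop9} to Proposition \ref{pr1} by exploiting the unitary intertwiner $I$. The central observation is that, by construction, $\widetilde{\eta}^{A,B} = I\circ \eta^{A,B}\circ I^{-1}$, so for each $n,m\in\mathbb{Z}$
\begin{equation*}
\left|\widehat{\psi}^{A,B}_{(n,m)}\right\rangle = \widetilde{\eta}^{A,B}\!\left(nq_0,\left(\begin{array}{c}\frac{nq_0mp_0}{2}\\ mp_0\end{array}\right)\right)(I\psi) = I\, \eta^{A,B}\!\left(nq_0,\left(\begin{array}{c}\frac{nq_0mp_0}{2}\\ mp_0\end{array}\right)\right)\psi = I\left|\psi^{A,B}_{(n,m)}\right\rangle.
\end{equation*}
Thus the candidate frame is just the image under $I$ of the frame from Proposition \ref{pr1}, and since a unitary map carries tight frames to tight frames with the same constant, the result should follow immediately.

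To execute this plan, I would first verify that $I = S_1^{-1}\circ S_2^{-1}\circ S_3^{-1}(1)$ is an isometric isomorphism from $L^2(\mathbb{R},dx)$ onto $\mathcal{H}$. Each factor is unitary: $S_3(1)$ via the change of variables $u=-\log x$ (this is the $\epsilon=1$ case of the isometry identity already displayed in the excerpt); $S_2$ via the tautology $\int x|f(x)|^2\,dx/x = \int|f(x)|^2\,dx$; and $S_1$ via the Plancherel theorem restricted to the band-limited subspace $\mathcal{H}$. Composing, $I$ is a unitary bijection $L^2(\mathbb{R})\to\mathcal{H}$, and so is $I^{-1}:\mathcal{H}\to L^2(\mathbb{R})$.

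With this in hand, for any $f\in\mathcal{H}$ I compute
\begin{equation*}
\sum_{n,m\in\mathbb{Z}}\left|\left\langle \widehat{\psi}^{A,B}_{(n,m)}\,\middle|\, f\right\rangle\right|^2
= \sum_{n,m\in\mathbb{Z}}\left|\left\langle I\psi^{A,B}_{(n,m)}\,\middle|\, f\right\rangle\right|^2
= \sum_{n,m\in\mathbb{Z}}\left|\left\langle \psi^{A,B}_{(n,m)}\,\middle|\, I^{-1}f\right\rangle\right|^2
= 2L\chi\,\|I^{-1}f\|^2 = 2L\chi\,\|f\|^2,
\end{equation*}
the third equality by Proposition \ref{pr1} applied to $I^{-1}f\in L^2(\mathbb{R})$ and the last by the isometry of $I$. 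The same string of identities written in operator form yields the tight-frame resolution $\sum_{n,m}|\widehat{\psi}^{A,B}_{(n,m)}\rangle\langle\widehat{\psi}^{A,B}_{(n,m)}| = 2L\chi\cdot\mathrm{Id}_{\mathcal{H}}$.

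The main obstacle is essentially bookkeeping rather than analysis: one must keep track of the domains and codomains of $S_1,S_2,S_3(1)$ to confirm that their composition is a genuine unitary bijection (not merely an isometry with proper range) and to recognize that the tight-frame conclusion is naturally phrased on the representation space $\mathcal{H}$, which is the closed subspace of $L^2(\mathbb{R})$ containing all the $\widehat{\psi}^{A,B}_{(n,m)}$. Once the unitarity of $I$ is nailed down, no further computation is needed — the whole statement is a unitary pullback of Proposition \ref{pr1}.
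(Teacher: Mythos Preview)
Your proposal is correct and matches the paper's approach: the paper does not give an explicit proof of this proposition but instead states just before it that ``the unitarity of $I$ allows us to translate statements'' from the $\eta^{A,B}$ picture to the $\widetilde{\eta}^{A,B}$ picture, which is precisely the unitary-pullback argument you wrote out in detail. Your observation that the resulting frame lives on $\mathcal{H}$ rather than all of $L^2(\mathbb{R})$ is a useful clarification that the paper leaves implicit.
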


\begin{proposition}
\textbf{$[$band limited tight frames for $EA$$]$} 
Under the assumptions of Theorem \ref{pr1},  for any $\epsilon \in (0,1] $ let $EA_{q_0,p_0}(\epsilon)$ be the
discrete subset of  $EA$ that was defined in Proposition \ref{propo7} by
$\left\{ \left(\alpha_n(\epsilon), \beta_{mn}(\epsilon), \gamma_{mn}(\epsilon) \right) |m,n \in \mathbb{Z}\right\}$. Let $\psi_{\epsilon}=IQ_{\epsilon}\psi$.
The sequence $\left\{ \left|\widehat{\psi}^{\epsilon,a(\epsilon),b(\epsilon)}_{(n,m)} \right\rangle \right\}_{m,n \in \mathbb{Z}}$, where $\left|\widehat{\psi}^{\epsilon,a(\epsilon),b(\epsilon)}_{(n,m)} \right\rangle=
\pi^{a(\epsilon),b(\epsilon)}_{\epsilon}(\alpha_n(\epsilon), \beta_{mn}(\epsilon), \gamma_{mn}(\epsilon))\left( \psi_{\epsilon} \right)$
constitutes a tight frame with frame constant that is equal to $2\frac{\sinh (\epsilon L)}{\epsilon}\chi$.
\label{prop10}
\end{proposition}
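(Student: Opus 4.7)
The plan is to deduce Proposition \ref{prop10} from Proposition \ref{propo7} by pushing the tight frame forward under the unitary intertwiner $I$. Since $\pi_\epsilon^{a,b} = I \circ \rho_\epsilon^{a,b} \circ I^{-1}$ by construction, and $\psi_\epsilon = I Q_\epsilon \psi$, the frame vectors for $\pi_\epsilon^{a(\epsilon),b(\epsilon)}$ are exactly the images under $I$ of the frame vectors built from $Q_\epsilon \psi$ for $\rho_\epsilon^{a(\epsilon),b(\epsilon)}$. That is,
\begin{equation}
\left|\widehat{\psi}^{\epsilon,a(\epsilon),b(\epsilon)}_{(n,m)} \right\rangle = I \left|\psi^{\epsilon,a(\epsilon),b(\epsilon)}_{(n,m)} \right\rangle
\end{equation}
for every $n,m \in \mathbb{Z}$.

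Next I would invoke the elementary fact that a unitary operator sends a tight frame with constant $C$ to a tight frame with the same constant $C$. Concretely, for any $f \in L^2(\mathbb{R},dx)$, unitarity of $I$ gives
\begin{equation}
\left\langle f \,\big|\, \widehat{\psi}^{\epsilon,a(\epsilon),b(\epsilon)}_{(n,m)} \right\rangle = \left\langle I^{-1} f \,\big|\, \psi^{\epsilon,a(\epsilon),b(\epsilon)}_{(n,m)} \right\rangle,
\end{equation}
so summing the squared moduli and applying Proposition \ref{propo7} to the vector $I^{-1} f$ yields
\begin{equation}
\sum_{n,m \in \mathbb{Z}} \left| \left\langle f \,\big|\, \widehat{\psi}^{\epsilon,a(\epsilon),b(\epsilon)}_{(n,m)} \right\rangle \right|^2 = 2\chi \frac{\sinh(\epsilon L)}{\epsilon} \, \|I^{-1} f\|^2 = 2\chi \frac{\sinh(\epsilon L)}{\epsilon} \, \|f\|^2,
\end{equation}
which is precisely the tight frame identity with the asserted constant.

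The only small verification needed to run this argument is that $I = S_1^{-1} \circ S_2^{-1} \circ S_3^{-1}(1)$ is indeed unitary from $L^2(\mathbb{R},dx)$ to $\mathcal{H} \subset L^2(\mathbb{R},dx)$. This is immediate from the definitions given in Section \ref{sec6}: $S_1$ is the Fourier transform restricted to the Hardy-type subspace $\mathcal{H}$, $S_2$ implements the change of measure $dx \to \tfrac{dx}{x}$ by multiplication by $\sqrt{x}$, and $S_3(1)$ is the substitution $x \mapsto e^{-x}$, which by the identity displayed just before the definition of $T_\epsilon$ (at $\epsilon = 1$) preserves $L^2$-norm. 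Thus no genuine obstacle appears; the only mild subtlety is to be careful that the frame identity derived in Proposition \ref{propo7} is valid on the full space $L^2(\mathbb{R},dx)$, so that applying it to $I^{-1}f$ is legitimate for arbitrary $f$, which is precisely what Proposition \ref{propo7} provides.
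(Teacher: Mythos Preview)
Your proposal is correct and matches the paper's approach exactly: the paper does not write out a separate proof for this proposition but states just before it that ``the unitarity of $I$ allows us to translate statements regarding the family $\{\rho_{\epsilon}^{a(\epsilon),b(\epsilon)}\}$ \ldots\ to the family $\{\pi_{\epsilon}^{a(\epsilon),b(\epsilon)}\}$,'' which is precisely the push-forward argument you give. Your verification that $I$ is unitary and your remark about applying Proposition~\ref{propo7} to $I^{-1}f$ fill in the details the paper leaves implicit.
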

\begin{remark}
The tight frames given in Propositions \ref{prop9} and \ref{prop10} are composed of functions whose
Fourier transform is compactly supported, and hence suitable for analyzing band limited  signals.
\end{remark}

\begin{corollary}
\textbf{$[$Contraction of tight frames$]$}
For any $f\in L^2(\mathbb{R},dx)$
\begin{eqnarray}\nonumber
&&\lim_{\epsilon \longrightarrow 0^+}\sum_{n,m\in \mathbb{Z}} \left|\langle f| \widehat{\psi}^{\epsilon,a(\epsilon),b(\epsilon)}_{(n,m)}\rangle\right|^2=\sum_{n,m\in \mathbb{Z}}\left|\langle f| \widehat{\psi}^{A,B}_{(n,m)}\rangle\right|^2
\end{eqnarray}
\end{corollary}
\section{Concluding remarks}\label{sec7}
\noindent It should be noted that one can not directly relate  the affine group to the Heisenberg group by contraction, since these groups are not of the same dimension.
An essential step in our approach was to consider the extended affine group which gives rise to the same analysis as the affine group and to use the contraction
of the extended affine group to the Heisenberg group.

In practice, when one analyzes signals by the standard wavelets or Gabor frames there is a hard problem of choosing the most suitable frame.
The frames that are given
 in this paper come in natural families that vary continuously in the parameter $\epsilon$.
This extra parameter, which is absent in the usual theory,  gives rise to a setting in which  one can potentially  choose a more suitable frame. This approach for \textit{synchronization} over Cartan motion groups was proven to be very efficient in a recent work \cite{Sharon2016}. 

For band limited signals our family of frames coincides for $\epsilon =1$ with the standard wavelet frame. For $\epsilon =0$ we obtain a new frame of Gabor type.
 This gives us a continuous family of frames,
which can be regarded as a continuous analog  of a discrete library (for an example of a discrete library see \cite{Coif}). Given a signal, it is a basic question
 to find an $\epsilon$ in the range $[0,1]$ that gives the best approximating frame. 
 
 This paper serves as a first step in the study of deformation of Gabor and wavelets frames using group contractions. We believe that the results presented here suggest further study in this direction.

\bibliography{references}

\end{document}